\numberwithin{equation}{section}
\theoremstyle{plain}
\newtheorem{thm}{Theorem}[section]
\newtheorem{theorem}[thm]{Theorem}
\newtheorem{lemma}[thm]{Lemma}
\theoremstyle{definition}
\newtheorem{remark}[thm]{Remark}
\newtheorem{definition}[thm]{Definition}
\newcommand{\smp}[1]{\left(#1 \right)}
\newcommand{\R}{\mathbb{R}}
\DeclareMathOperator{\Sym}{Sym}
\newcommand{\mn}{\sqrt{-1}}
\DeclareMathOperator{\trace}{tr}
\newcommand{\tr}[2]{\textrm{tr}_{#1}{#2}}
\newcommand{\ti}[1]{\tilde{#1}}
\newcommand{\E}{\mathcal{E}}
\providecommand{\abs}[1]{| #1|}
\providecommand{\norm}[1]{\lVert#1\rVert}
\renewcommand{\leq}{\leqslant}
\renewcommand{\geq}{\geqslant}
\renewcommand{\le}{\leqslant}
\renewcommand{\ge}{\geqslant}
\newcommand{\de}{\partial}
\newcommand{\db}{\overline{\partial}}
\newcommand{\ddb}{\partial \ov{\partial}}
\newcommand{\ddbar}{\sqrt{-1} \partial \overline{\partial}}
\newcommand{\ov}[1]{\overline{#1}}
\newcommand{\proj}{\mathrm{p}}
\title[Nonlinear elliptic equations on complex manifolds]{$C^{2,\alpha}$ estimates for  nonlinear elliptic equations in complex and almost complex geometry}
\author[V. Tosatti]{Valentino Tosatti}
\author[Y. Wang]{Yu Wang}
\author[B. Weinkove]{Ben Weinkove}
\author[X. Yang]{Xiaokui Yang}
\address{Department of Mathematics, Northwestern University, 2033 Sheridan Road, Evanston, IL 60208}
\thanks{Research supported in part by NSF grants DMS-1236969, DMS-1308988 and DMS-1332196.  The first named-author is supported in part by a Sloan Research Fellowship.}
\begin{document}
\begin{abstract} We describe how to use the perturbation theory of Caffarelli to prove Evans-Krylov type $C^{2,\alpha}$ estimates for solutions of  nonlinear elliptic equations  in complex  geometry, assuming a bound on the Laplacian of the solution.  Our results can be used to replace the various Evans-Krylov type arguments in the complex geometry literature with a sharper and more unified approach.  In addition, our methods extend to  almost-complex manifolds, and we use this to obtain a new local  estimate for an equation of Donaldson.
\end{abstract}

\maketitle

\section{Introduction}

The Evans-Krylov theorem \cite{Ev, Kr} for nonlinear elliptic equations
$$F(D^2 u )= f,$$
with $F$ concave,
has long been used in the study of PDEs to show that bounds on $u$ and $D^2u$ imply
$C^{2,\alpha}$ bounds on $u$ for some $\alpha>0$.  This theory applies whenever $f$ is suitably bounded and the  bounds on $u$ imply uniform ellipticity. This scenario appears in the study of many PDEs, including the real Monge-Amp\`ere equation, the $\sigma_k$-equations and their  differential-geometric counterparts.

At the same time, nonlinear concave operators occur frequently in the study of complex geometry.  Complex-differential techniques often yield the estimates
\begin{equation} \label{assumeu0}
\| u \|_{L^{\infty}} \le K, \qquad \Delta u \le K,
\end{equation}
and one would like to conclude that $u$ is bounded in $C^{2,\alpha}$.  In the examples, as we will see, the bound on the Laplacian of $u$ typically gives bounds for the \emph{complex} Hessian of $u$, but not on the \emph{real} Hessian of $u$.

Up until recently there have been two approaches to the $C^{2,\alpha}$ estimate in complex geometry.  The first is to carefully reprove the Evans-Krylov estimates in the complex setting by differentiating the equation twice in complex coordinates and using concavity together with a Harnack inequality.  The second approach is to establish estimates on the \emph{real} Hessian $D^2 u$, using bounds that one typically has on the complex Hessian, and then apply the real Evans-Krylov theory referred to above. Both approaches require considerable work to carry out, and are also suboptimal, in the sense that the $C^{2,\alpha}$ estimates depend on \emph{two derivatives} of $f$.

Recently the second-named author showed that \cite{Wa}, for the complex Monge-Amp\`ere equation,
$$\det \left( \frac{\partial^2 u}{\partial z^i \partial \ov{z}^j} \right)=e^{\psi},$$
one can instead directly apply the theory of Caffarelli \cite{Ca} to obtain local $C^{2,\alpha}$ estimates depending only on the constant $K$ of (\ref{assumeu0}) and a H\"older estimate of $\psi$.  This settled a regularity issue in complex geometry related to \cite{CT}  (see also \cite{DZZ}).

In this paper, we generalize the method of \cite{Wa} to a number of nonlinear equations which naturally occur in complex (and almost complex) geometry.  We show that for these equations, under the assumptions (\ref{assumeu0}), the $C^{2,\alpha}$ estimates follow from the general nonlinear PDE theory developed by Caffarelli.  This can replace the menagerie of arguments in the literature by a single unified theorem.   Moreover, we hope that our results can be used as a convenient ``black box'' in the future study of elliptic equations in complex geometry.

We should stress that the contributions of this paper are not to the general theory of real nonlinear elliptic PDE.  Rather our  aim is to demonstrate how the existing PDE theory (some of which may be unfamiliar to complex geometers) can be used to simplify and improve the $C^{2,\alpha}$ estimate for many examples of elliptic equations appearing in complex geometry.

We now describe the various equations in complex geometry for which our results can be applied.  Let $(M, J, \omega)$ be a compact Hermitian manifold.  Suppose that the real-valued function $u \in C^2(M)$ satisfies
\begin{equation} \label{assumeu}
\| u \|_{L^{\infty}(M)} \le K, \ \textrm{and } \ \Delta u \le K,
\end{equation}
where $$\Delta u = \frac{n\omega^{n-1} \wedge \ddbar u}{\omega^n}$$ is the complex Laplacian associated to $\omega$.  Note that in all of the cases described below, the $L^{\infty}$ bound on $u$ could be replaced by a normalization condition, say $\int_M u=0$, once the Laplacian bound is given.

Assume that $\psi \in C^{\alpha_0}(M)$  and $\chi$ is a real $(1,1)$ form with coefficients in $C^{\beta}(M)$.  We allow the possibility that $\chi$ and $\psi$ depend on $u$, and indeed this is important for our applications.  We consider $u$ satisfying one of the following equations.

\bigskip
\noindent
\emph{The complex Monge-Amp\`ere equation:}
\begin{equation} \label{eqnma}
\begin{split}
(\chi + \ddbar u)^n = {} & e^{\psi} \omega^n \\
 \quad \chi + \ddbar u> {} & 0.
\end{split}
\end{equation}

\noindent
\emph{The complex Hessian equations:  for a fixed $k=2, \ldots, n-1$,}
\begin{equation} \label{eqnch}
\begin{split}
(\chi + \ddbar u)^k \wedge \omega^{n-k} = {} &  e^{\psi} \omega^n, \\
(\chi + \ddbar u)^j\wedge \omega^{n-j} > {} & 0, \quad \textrm{for } j=1, \ldots, k.
\end{split}
\end{equation}

\noindent
\emph{The complex $\sigma_n/\sigma_k$ equations: for a fixed $k=2, \ldots, n-1$,}
\begin{equation} \label{eqnd}
\begin{split}
(\chi + \ddbar u)^k \wedge \omega^{n-k} = {} & e^{\psi} (\chi + \ddbar u)^n \\
\chi + \ddbar u > {} & 0.
\end{split}
\end{equation}

\noindent
\emph{The Monge-Amp\`ere equation for $(n-1)$-plurisubharmonic equations:}
\begin{equation} \label{eqnnm1}
\begin{split}
\left( \chi + \frac{1}{n-1} \left( (\Delta u) \omega - \ddbar u \right) \right)^n = {} & e^{\psi} \omega^n \\
\chi + \frac{1}{n-1} \left( (\Delta u) \omega - \ddbar u \right) >{} &  0.
\end{split}
\end{equation}

\noindent
\emph{The $(n-1)$-plurisubharmonic version of the complex Hessian and $\sigma_n/\sigma_k$ equations:}
\begin{equation} \label{eqnnm2}
\begin{array}{c}
\textrm{replace $\ddbar u$ by $\frac{1}{n-1}((\Delta u) \omega - \ddbar u)$} \\
\textrm{and suppose $u$ satisfies one of  (\ref{eqnch}) or (\ref{eqnd}).}
\end{array}
\end{equation}

\noindent
\emph{Almost complex versions of all of the above.} Replace $(M, J, \omega)$ Hermitian by  $(M, J, \omega)$ almost Hermitian, $\chi$ by a real $(1,1)$ form w.r.t. $J$ and  $\ddbar u$ by $\frac{1}{2}(dJdu)^{(1,1)}$ and suppose that
\begin{equation} \label{eqnac}
\begin{array}{c}
\textrm{with these replacements,} \\
\textrm{$u$ satisfies one of (\ref{eqnma}), (\ref{eqnch}), (\ref{eqnd}),  (\ref{eqnnm1}) or (\ref{eqnnm2})}.
\end{array}
\end{equation}

Our main result is:

\begin{theorem} \label{theoremCG}
Assume that $u\in C^2(M)$ satisfies (\ref{assumeu}) on $(M, J, \omega)$ and any one of (\ref{eqnma}), (\ref{eqnch}), (\ref{eqnd}), (\ref{eqnnm1}), (\ref{eqnnm2}) or (\ref{eqnac}).  Then
$$\| u \|_{C^{2,\alpha}(M)} \le C,$$
where $\alpha$, $C$ depend only on $n$, $\beta$, $(M, J, \omega)$, $K$, $\|\psi \|_{C^{\alpha_0}}$ and $\| \chi \|_{C^{\beta}}$.
\end{theorem}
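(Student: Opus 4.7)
The plan is to apply Caffarelli's interior $C^{2,\alpha}$ perturbation theory for concave, uniformly elliptic fully nonlinear equations, extending the approach of Wang \cite{Wa} (given there for complex Monge--Amp\`ere) to each equation on the list. By a partition of unity and a standard covering argument it suffices to prove a local estimate in a small coordinate ball $B\subset\C^n$ in which $\omega$ is close to the Euclidean form, so I would work in such a chart throughout.

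From (\ref{assumeu}) together with the positivity condition built into each equation (the Hessian cone conditions for (\ref{eqnch}), $\chi+\ddbar u>0$ for (\ref{eqnma}) and (\ref{eqnd}), and the analogous conditions after the linear substitution $\ddbar u \mapsto \frac{1}{n-1}((\Delta u)\omega-\ddbar u)$ for (\ref{eqnnm1})--(\ref{eqnnm2})), I would first derive two-sided eigenvalue bounds $cI\le H\le CI$ for the relevant Hermitian matrix $H$ depending linearly on $u_{i\bar j}$ and $\chi$; this is exactly where the Laplacian bound enters. Each equation then takes the form $F(H)=\tilde\psi$, with $F$ a concave, symmetric, uniformly elliptic function of the eigenvalues on the admissible range and $\tilde\psi\in C^{\alpha_0}$.

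The main step is then to run Caffarelli's perturbation scheme on this Hermitian fully nonlinear equation. Following \cite{Wa}, I would identify Hermitian matrices with $J$-invariant real symmetric matrices and extend $F$ to all real symmetric matrices by precomposing with the projection $M \mapsto M^{(1,1)}$ onto the $J$-invariant part; concavity and symmetry of $F$ are preserved. Uniform ellipticity on the Hermitian subspace is enough to compare $u$ on small balls with solutions of the frozen constant-coefficient problem, for which sharp interior $C^{2,\alpha}$ estimates come from Caffarelli's real theory \cite{Ca}, and the standard iteration upgrades $u$ to $C^{2,\alpha}$ with the dependencies claimed. The $(n-1)$-plurisubharmonic equations (\ref{eqnnm1})--(\ref{eqnnm2}) are reduced to the earlier ones by the invertible, concavity-preserving linear eigenvalue map $\lambda_i \mapsto \frac{1}{n-1}\sum_{j\ne i}\lambda_j$.

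The main obstacle will be the almost-complex case (\ref{eqnac}). There $\frac{1}{2}(dJdu)^{(1,1)}$ differs from $\ddbar u$ by a linear combination of first derivatives of $u$ with H\"older coefficients arising from the Nijenhuis tensor of $J$. The plan is to first exploit the Laplacian bound, together with linear Calder\'on--Zygmund and Krylov--Safonov estimates, to bootstrap a $C^{1,\alpha}$ bound on $u$; the extra Nijenhuis terms then become a $C^\alpha$ contribution that can be absorbed into $\tilde\psi$, after which the complex argument above applies verbatim and yields the theorem.
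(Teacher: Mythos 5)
Your overall plan---local reduction via a chart, two-sided eigenvalue bounds from the Laplacian bound plus positivity, identification of Hermitian matrices with $J$-invariant real symmetric matrices, composition with the projection $\proj(N)=\tfrac{1}{2}(N+J^TNJ)$, and appeal to Caffarelli's perturbation theory---is essentially the route taken in the paper, and the treatment of the almost-complex first-order error terms via $C^{1,\alpha}$ bootstrap and absorption into $\chi$ is also exactly what the paper does. However, there is a genuine gap in the central step. You propose to ``extend $F$ to all real symmetric matrices by precomposing with the projection,'' but this only converts the Hermitian operator into a function of real symmetric matrices; it does nothing to extend $F$ beyond the region where it is concave and elliptic. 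For every one of these operators ($\det^{1/n}$, $\sigma_k^{1/k}$, $(\sigma_n/\sigma_k)^{1/(n-k)}$, etc.) concavity and uniform ellipticity hold only on a cone or, after the a priori bounds, on a compact convex set $\E$ of admissible Hessians---not globally on $\Sym(2n)$. Caffarelli's theorem (and the Evans--Krylov input feeding into it) requires an operator that is concave and uniformly elliptic on \emph{all} of $\Sym(2n)$. The paper's key device, which your proposal omits, is the concave affine envelope
\[
\bar F(N,x)=\inf\{L(N):L\text{ affine},\ DL\in[\lambda I,\Lambda I],\ L\ge F(\cdot,x)\text{ on }\E\},
\]
which agrees with $F$ on $\E$ but is globally concave, globally uniformly elliptic, and inherits the H\"older-in-$x$ bound; without something like this, the reduction to Caffarelli's setting does not go through.

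A secondary point you leave unaddressed: after composing with $\proj$, the operator is literally constant in anti-$J$-invariant directions $P=-J^TPJ$, so it is not uniformly elliptic in the naive sense. What rescues it is that the only \emph{nonnegative} anti-$J$-invariant matrix is zero, so $\|\proj(P)\|$ is comparable to $\|P\|$ for $P\ge 0$ (this is condition {\bf H2}.(3) in the paper). Since Caffarelli's ellipticity hypothesis is phrased in terms of nonnegative increments, this suffices---but the argument needs to be made explicit, and your phrase ``uniform ellipticity on the Hermitian subspace is enough'' sweeps it under the rug. Your idea of handling the $(n-1)$-plurisubharmonic equations via the linear eigenvalue map $\lambda_i\mapsto\frac{1}{n-1}\sum_{j\ne i}\lambda_j$ is workable and equivalent in spirit to the paper's choice of $T$, though one still must verify that this map satisfies the same two-sided comparability on nonnegative matrices.
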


A first remark is that although we have stated the results for \emph{compact} $M$, this is only for convenience.  All our estimates and results are purely local.

We now discuss each of the equations (\ref{eqnma}), (\ref{eqnch}), (\ref{eqnd}), (\ref{eqnnm1}), (\ref{eqnnm2}) and (\ref{eqnac}) in turn, describing briefly some of the existing results in the literature.

 We begin with the complex Monge-Amp\`ere equation (\ref{eqnma}) when $\chi$ is a fixed \emph{K\"ahler} metric (i.e. $\chi>0$ and $d\chi=0$).  Global existence of solutions was proved in the seminal work of Yau \cite{Ya}, in which he used
   Calabi's third order estimate \cite{Cal} to establish the $C^{2,\alpha}$ estimate.  This estimate depends on \emph{three} derivatives of $\psi$.   A direct proof depending on two derivatives of $\psi$, using the Evans-Krylov approach,  was given by Siu \cite{Si} (cf. Trudinger \cite{Tr}).  A $C^{2,\alpha}$ estimate depending only on the H\"older bound for $\psi$, but also on a  bound for the \emph{real} Hessian of $u$ was given by Dinew-Zhang-Zhang \cite{DZZ}.  As discussed above, the second-named author \cite{Wa}  established the result of Theorem \ref{theoremCG} in the case of the  complex Monge-Amp\`ere equation (under slightly weaker hypotheses for $u$).

In the general setting of a fixed Hermitian metric $\chi$,  the existence of solutions to the complex Monge-Amp\`ere equation (\ref{eqnma}) was established by Cherrier \cite{Ch} for $n=2$ (and in higher dimensions with additional hypotheses) and by the first and third-named authors in general \cite{TW2}.  Cherrier \cite{Ch} established a $C^{2,\alpha}$ bound via
 a Calabi-type third order estimate.  Guan-Li \cite{GL} proved a $C^{2,\alpha}$ estimate by first establishing a bound on the real Hessian of $u$ using a maximum principle argument and then applying the usual Evans-Krylov theory.  In \cite{TW1}, a direct Evans-Krylov argument was given, following a similar approach to that given in the notes of Siu \cite{Si}.  All of these existing estimates depend on at least two derivatives of $\psi$. Theorem \ref{theoremCG} can replace and improve these results.

The complex Hessian equations (\ref{eqnch}) with $\chi=\omega$ K\"ahler were solved by Dinew-Ko{\l}odziej \cite{DK}, who made use of estimates of Hou \cite{Hou} and Hou-Ma-Wu \cite{HMW}.  For related works see \cite{Bl, DK2, J, Li, Lu, N}.  A $C^{2,\alpha}$ estimate in this case was proved by Jbilou \cite{J} by bounding the real Hessian and then applying the real Evans-Krylov theory (see also \cite{Hou}, \cite{Kok}).

The $\sigma_n/\sigma_k$ equations for $k=n-1$ and $\chi$ a fixed K\"ahler metric was introduced by Donaldson \cite{Do} in the context of global complex geometry.  When $n=2$, Chen \cite{Chen1} observed that the equation reduces to the complex Monge-Amp\`ere equation (\ref{eqnma}).  Necessary and sufficient conditions for existence of solutions for $k=n-1$ were given by Song-Weinkove \cite{SW} using a parabolic method (see \cite{Chen2, We1, We2}).   For general $k$, the analogous result was proved by Fang-Lai-Ma \cite{FLM}, again using a parabolic equation.  The elliptic equation, for  $\chi$ a Hermitian metric, was solved by Sun \cite{Sun} (see also the work of Guan-Sun \cite{GS} and Li \cite{LiYi}).  In particular, Sun established the elliptic Evans-Krylov $C^{2,\alpha}$ estimate \cite{Sun}, again depending on two derivatives of $\psi$.

The Monge-Amp\`ere equation for $(n-1)$-plurisubharmonic functions (\ref{eqnnm1}) has appeared in the literature in various forms, corresponding to different choices of $\chi$ and $\omega$.   Harvey-Lawson introduced this equation and the notion of $(n-1)$-plurisubharmonic functions in $\mathbb{C}^n$ \cite{HL2}.  In the case of compact manifolds, the simplest case is when $\chi$ is a fixed Hermitian metric and $\omega$ is K\"ahler.  This was introduced and investigated by Fu-Wang-Wu \cite{FWW, FWW2}, motivated by some questions related to  mathematical physics (see \cite{LY, FY}, for example).
The equation (\ref{eqnnm1}) in this setting was solved by the first and third-named authors \cite{TW4}, and then more recently extended to the case of $\omega$ Hermitian \cite{TW5}.

Another important setup for equation (\ref{eqnnm1}) is the case
$$\chi=\chi_0 + *E,$$
where $\chi_0$ is a Hermitian metric, $*$ is the Hermitian Hodge star operator of $\omega$, and
$$E=\frac{1}{(n-1)!}\mathrm{Re}\left(\mn\de u\wedge\db(\omega^{n-2}) \right).$$
Note that Theorem \ref{Apply EK} still applies, because by the bounds \eqref{assumeu} we conclude that $\|\nabla u\|_{C^{\beta}(M)}\leq C$, for a uniform constant $C$, and hence $\chi$ has a uniform $C^\beta$ bound.
The equation (\ref{eqnnm1}) with this choice of $\chi$ was introduced by Popovici \cite{Po}, motivated by some questions in algebraic geometry, and was also studied by the first and third-named authors  \cite{TW5} as an approach towards a conjecture of Gauduchon \cite{Ga}.  The general question of existence of solutions remains open.

There is yet another natural choice of $\chi$ in (\ref{eqnnm1}):
$$\chi=\chi_0 + 2*E+u\mn\ddb(\omega^{n-2}),$$
where $\chi_0$ is a Hermitian metric, and $*E$ is the same as above.
In this case,
$$\chi+\frac{1}{n-1} \left(  (\Delta u) \omega - \ddbar u \right)=*\left(\omega_0^{n-1}+\ddbar(u\omega^{n-2})\right),$$
where $\omega_0$ is Hermitian and $\omega_0^{n-1}+\ddbar(u\omega^{n-2})$ is a positive $(n-1,n-1)$ form. This choice of $\chi$ gives an equation
introduced by Fu-Wang-Wu \cite{FWW}.
It is not known in general whether it can be solved (except when $\omega$ is K\"ahler  \cite{TW4}.)

The $C^{2,\alpha}$ estimates for (\ref{eqnnm1}) in the above settings (again, depending on two derivatives of $\psi$) were established in \cite{FWW, TW4, TW5} by adapting the usual Evans-Krylov approach.  Theorem \ref{theoremCG} replaces and sharpens these arguments.

As far as we know, the equations (\ref{eqnnm2}) have yet to be studied, at least in these  explicit forms on compact manifolds.  However, we have included these equations since they appear to be natural PDEs which easily fit into our setting.

The Dirichlet problem for the almost complex Monge-Amp\`ere equation (\ref{eqnac}) has been solved in various settings by Harvey-Lawson \cite{HL} and Pli\'s \cite{Pl}.  A related, but different, equation in the almost complex case was investigated by Delano\"e \cite{De}.     Donaldson's Calabi-Yau equation for almost K\"ahler forms \cite{Do2}, which is not ostensibly of the form (\ref{eqnac}), does in fact fit into this setting. Evans-Krylov results were proved for this equation in \cite{We3} and \cite{TWY}.  In Section \ref{sectiondon} below, we describe Donaldson's equation and how our results can be used to prove a new local Evans-Krylov estimate (Theorem \ref{theoremdon}).

We next describe the local PDE theorem which we use to prove Theorem \ref{theoremCG}.  This PDE theorem is a consequence of results of Caffarelli \cite{Ca} and ideas adapted from the work of the second-named author \cite{Wa} (see also \cite{Sa}).
We have packaged the theorem in a way to make it easy to apply to our examples.

Let $B_1$ be the unit ball in $\mathbb{R}^{2n}$.   Write $\Sym (2n )$ for the space of symmetric $2n \times 2n$ matrices with real entries.
We consider equations of the form
\begin{equation} \label{me}
   F\left( S(x)  + T \smp{D^2 u (x), x },x \right) = f(x), \quad \textrm{for } f \in C^{\alpha_0}, \ x \in B_1,
\end{equation}
where
\[
\begin{split}
& F : \Sym (2n )\times B_1 \rightarrow \R, \\
& S: B_1 \rightarrow \Sym (2n), \\
& T :\Sym (2n ) \times B_1 \rightarrow \Sym(2n).
\end{split}
\]
For any $A\in\Sym(2n)$ will denote by $\|A\|$ its operator norm, i.e. the maximum of the absolute value of the eigenvalues of $A$.
We impose  the following structure conditions on $F, S$ and  $T$.  We assume that there exists a compact convex set $\E \subset \Sym(2n)$, positive constants $\lambda, \Lambda, K$ and $\beta \in (0,1)$ such that the following hold.\\

\begin{description}
\item[H1]  $F$ is of class $C^1$ in $U\times B_1$ where $U$ is a neighborhood of $\E$ and
\begin{enumerate}
\item $F$ is uniformly elliptic in $\E$:
\[
\lambda | \xi |^2   \leq \sum_{i,j}F_{ij}(A,x) \xi^i \xi^j \le \Lambda | \xi|^2,
\]
for all $A\in\E, \xi \in \mathbb{R}^{2n}, x\in B_1$, where $F_{ij}(A,x) = \frac{\partial F}{\partial A_{ij}} (A,x)$.

\item $F$ is concave in $\E$:
\[
 F \smp{ \frac{A+ B}{2},x} \geq  \frac{1}{2} F(A,x)  + \frac{1}{2} F(B,x) ,\quad \textrm{for all} \ A, B \in \E, x\in B_1.
\]

\item $F$ has the following uniform H\"older bound in $x$:
$$|F(N,x)-F(N,y)|\leq K|x-y|^\beta, \quad \textrm{for all} \ N\in \E, x,y\in B_1,$$
and $|F(N,0)| \le K$, for all $N\in\E$.
\end{enumerate}

\item[H2] The map $T :  \Sym(2n) \times B_1 \rightarrow \Sym(2n)$ satisfies the following conditions:
\begin{enumerate}
\item For all $x, y \in B_1$ and all $N  \in \Sym(2n)$,
\[
\frac{\norm{T (N, x)  - T(N, y) }}{\norm{N}+1} \leq K |x -y|^\beta.
\]
\item For each fixed $x \in B_1$, the map $N \mapsto T(N, x )$ is  linear on $\Sym(2n)$.
\item For all $P \ge 0$ and $x \in B_1$,
\[
\begin{split}
T(P,x) \ge 0, \  \textrm{and} \ K^{-1} \| P \| \leq  {} &   \| T ( P,x) \|  \leq K \| P\|.
\end{split}
\]
\end{enumerate}

\item[H3] $S : B_1 \rightarrow \Sym(2n)$ has a uniform $C^{\beta}$ bound:
$$\| S(x) - S(y) \| \le K |x-y|^{\beta}, \quad \textrm{for all } \  x, y\in B_1$$
and $\| S(0) \| \le K$.
\end{description}

The result is:

\begin{thm}
\label{Apply EK}  With the assumptions above, suppose that $u \in C^2(B_1)$ solves (\ref{me}) and satisfies
\begin{equation}
\label{C2 priori}
S(x) + T\smp{D^2 u (x) , x}  \in \E , \quad \textrm{for all} \ x \in B_1.
\end{equation}
Then $ u  \in C^{2,\alpha} (B_{1/2})$ and
\[
\norm{u}_{C^{2, \alpha} (B_{1/2})} \leq C,
\]
where $\alpha, C$ depend only on $\alpha_0,  K, n, \Lambda, \lambda, \beta,\| f \|_{C^{\alpha_0}}$ and $ \| u \|_{L^{\infty}(B_1)}$.
\end{thm}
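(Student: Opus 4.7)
The plan is to reduce (\ref{me}) to a standard concave uniformly elliptic equation with H\"older dependence on $x$, so that Caffarelli's interior $C^{2,\alpha}$ estimate \cite{Ca}, in the refined form developed in \cite{Wa} (cf.\ \cite{Sa}), applies directly. Set
\[
\tilde F(N,x) := F\bigl(S(x) + T(N,x),\, x\bigr),
\]
so that (\ref{me}) becomes $\tilde F(D^2 u,x) = f(x)$. The bulk of the work is to verify that H1--H3 imply the three structural conditions required by Caffarelli's theorem: concavity of $\tilde F$ in $N$, uniform ellipticity of $\tilde F$, and a H\"older modulus in $x$ of the form $|\tilde F(N,x) - \tilde F(N,y)| \le C(\|N\|+1)|x-y|^\beta$.

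Concavity in $N$ is immediate: by H2(2), $T(\cdot,x)$ is linear, so $N\mapsto S(x)+T(N,x)$ is affine, and composing with the concave map $F(\cdot,x)$ of H1(2) gives a concave function of $N$ wherever $S(x)+T(N,x)\in\E$. For uniform ellipticity, the chain rule gives
\[
\sum_{ij}\tilde F_{ij}(N,x)\,\xi^i\xi^j \;=\; \trace\!\bigl(F'(S(x)+T(N,x),x)\cdot T(\xi\xi^\top,x)\bigr),
\]
and the matrix $T(\xi\xi^\top,x)$ is positive semidefinite with $K^{-1}|\xi|^2 \le \|T(\xi\xi^\top,x)\| \le \trace(T(\xi\xi^\top,x)) \le 2nK|\xi|^2$ by H2(3); together with H1(1) this yields $(\lambda/K)|\xi|^2 \le \sum \tilde F_{ij}\xi^i\xi^j \le 2nK\Lambda|\xi|^2$. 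For the H\"older dependence on $x$, set $A_z := S(z)+T(N,z)$ and write
\[
\tilde F(N,x)-\tilde F(N,y) = \bigl[F(A_x,x)-F(A_x,y)\bigr] + \bigl[F(A_x,y)-F(A_y,y)\bigr].
\]
The first bracket is controlled by H1(3), and the second by the Lipschitz bound on $F(\cdot,y)$ on $\E$ (coming from H1(1)) together with $\|A_x-A_y\| \le K(\|N\|+1)|x-y|^\beta$ from H2(1) and H3.

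These properties hold on the set $\{(N,x) : S(x)+T(N,x)\in\E\}$, which by (\ref{C2 priori}) contains the Hessian of the solution. To place oneself in the exact hypotheses of \cite{Ca}, one extends $\tilde F$ to a globally defined concave uniformly elliptic operator $\hat F$ on all of $\Sym(2n)\times B_1$ with the same structural constants, by taking the infimum envelope of supporting affine functions of $\tilde F(\cdot,x)$ over a compact subset of its natural domain --- this is the construction used in \cite{Wa}. The equation $\hat F(D^2u,x)=f(x)$ then holds pointwise on $B_1$, with $\hat F$ fitting the hypotheses of Caffarelli's theorem, and the desired interior $C^{2,\alpha}$ estimate on $B_{1/2}$ follows. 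I expect the main delicate point to be this extension step: preserving the quantitative H\"older modulus in $x$ under the envelope construction, given only $C^1$ regularity of $F$ in its matrix argument, is precisely the technical issue which forces one to appeal to the refinements in \cite{Wa,Sa} rather than to a naive application of \cite{Ca}.
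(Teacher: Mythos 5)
Your verification that $\tilde F(N,x) := F(S(x)+T(N,x),x)$ is concave, uniformly elliptic, and H\"older in $x$ on the set where $S(x)+T(N,x)\in\E$ is correct, and you rightly flag the extension step as the crux. But as sketched, that step has a genuine gap, and the delicacy lies elsewhere than you suggest: the problem is not the H\"older modulus but whether $\hat F(D^2u(x),x)=f(x)$ actually holds. Since $T(\cdot,x)$ has a large kernel in every example of interest (e.g.\ $T=\proj$ for the complex Monge--Amp\`ere equation), the set $\{N:S(x)+T(N,x)\in\E\}$ is an unbounded cylinder, and hypothesis \eqref{C2 priori} only places $D^2u(x)$ somewhere in that cylinder---precisely because no bound on the full real Hessian is available. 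An envelope over a compact subset of the cylinder therefore does not obviously agree with $\tilde F$ at $D^2u(x)$. And if you try to take the envelope over the whole cylinder with slopes in a fixed elliptic band $\mathcal{H}'$ (``same structural constants''), the infimum is $-\infty$: any slope not annihilating $\ker T(\cdot,x)$ drops below the bounded, $\ker T$-invariant function $\tilde F$ along the unbounded directions. One can try to salvage this by using only tangent planes of $\tilde F$ at points of a cross-section of the cylinder, whose slopes $T^*(F'(\cdot),x)$ do annihilate $\ker T$, but then one must check that the cross-section and the resulting slope set can be chosen uniformly in $x$, which does not follow directly from H2.

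The paper sidesteps all of this by reversing the order of operations. It first builds the envelope $\bar F(M,x)$ in the variable $M=S(x)+T(N,x)$---as the infimum of affine $L$ with $DL\in\mathcal H$ and $L\ge F(\cdot,x)$ on the \emph{compact} convex set $\E$ (Lemma \ref{use})---and only afterwards composes, setting $\Phi(N,x)=\bar F(S(x)+T(N,x),x)$ (Lemma \ref{Claim2 aEK}). Because the envelope is taken over $\E$, it is globally well-defined and $\bar F=F$ on $\E$, so $\Phi(D^2u,x)=f$ follows trivially from \eqref{C2 priori}; concavity, ellipticity and the H\"older modulus of $\Phi$ are then transported from $\bar F$ through the affine map using H2 and H3. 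Your plan points in the right general direction, but the precomposition-with-$T$-before-enveloping is exactly what makes the envelope step problematic; the paper's envelope-then-compose order is what makes it clean.
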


The above interior estimate can immediately be applied to each local chart for equations on a compact manifold to obtain a global estimate.

\begin{remark} We also expect a parabolic version of the above theorem to hold (cf. \cite{Wa2}).  This would be useful in providing a unified approach to parabolic H\"older estimates for parabolic flows in complex geometry, including the K\"ahler-Ricci flow, the Chern-Ricci flow, the J-flow and their generalizations (see \cite{Cao, Cha, PSS,  CLN, Gi, TW3, Chen2, We1, FLM} for example).
\end{remark}

The outline of the paper is as follows.  In Section \ref{sectionexamples} we describe how to apply Theorem \ref{Apply EK} to obtain Theorem \ref{theoremCG}.   In Section \ref{sectionc} we briefly describe the relevant Evans-Krylov-Caffarelli theory that we need, and then in Section \ref{sectionproof} we use it to establish Theorem \ref{Apply EK}.  Finally in Section \ref{sectiondon} we describe an application of our results to an equation of Donaldson.

\section{Equations on complex and almost complex manifolds} \label{sectionexamples}

In this section, we give the proof of Theorem \ref{theoremCG}.   First, pick
 a chart in $M$ which is identified with
the unit ball $B_1$ in $\mathbb{C}^n$ with local coordinates $(z^1,\dots,z^n)$. In these coordinates we write
$$\omega=\mn\sum_{i,j} g_{i\ov{j}} dz^i\wedge d\ov{z}^j, $$ where $(g_{i\ov{j}}(x))$ is a positive definite $n\times n$ Hermitian matrix at each point $x\in B_1$. Similarly, we write
$$\chi = \mn \sum_{i,j} h_{i\ov{j}} dz^i\wedge d\ov{z}^j,$$
for $(h_{i\ov{j}}(x))$ an $n\times n$ Hermitian matrix (not necessarily positive definite). Furthermore we have $\ddbar u= \mn\sum_{i,j} u_{i\ov{j}} dz^i\wedge d\ov{z}^j$, where $u_{i\ov{j}}=\frac{\de^2 u}{\de z^i \de\ov{z}^j}$ is
the complex Hessian of $u$.

We also have real coordinates $x^1, \ldots, x^{2n}$ on $B_1$ defined by
$$z^i = x^i + \sqrt{-1} x^{n+i}, \qquad \textrm{for } i=1, \ldots, n.$$
The standard complex structure on $\mathbb{C}^n$ corresponds to an endomorphism $J$ of the real tangent space to $B_1$.  The endomorphism $J$ sends $\partial/\partial x^i$ to $\partial/\partial x^{n+i}$ and $\partial/\partial x^{n+i}$ to $-\partial/\partial x^i$.  As a matrix,
\[
  J= \begin{pmatrix}
0 & -I_n \\
I_n & 0
\end{pmatrix},\]
for $I_n$ the $n\times n$ identity matrix.

In the standard way, we can identify Hermitian $n\times n$ matrices with the subset of $\Sym(2n)$ given by $J$-invariant matrices.
Namely, for $H$ a Hermitian matrix, we write $H=A+\mn B$ with $A,B$ real $n\times n$ matrices, and define
$$\iota(H)=\begin{pmatrix}
A & B \\
-B & A
\end{pmatrix}\in\Sym(2n).$$
Note that if $H_1, H_2$ are two Hermitian matrices then
\begin{equation} \label{eqi}
H_1 \le H_2 \quad \Longleftrightarrow \quad \iota(H_1) \le \iota(H_2).
\end{equation}
Moreover, observe that
$$ \iota\left( 2 u_{i\ov{j}}(x) \right)=\proj(D^2 u(x) ),$$
for
$$\proj(N):= \frac{1}{2} (N + J^TNJ),$$
the projection onto the $J$-invariant part.  The image of $\iota$ is equal to the image of $\proj$.

\bigskip

\noindent
\emph{The complex Monge-Amp\`ere equation (\ref{eqnma})}.
Define $S(x)=\iota(2h_{i\ov{j}}(x))$ and $T(N, x) =  \proj (N)$.  Notice that $T(D^2u(x), x) = \iota( 2u_{i\ov{j}}(x))$.

Next observe that the assumption $\Delta u \le K$ implies that the positive $(1,1)$ form $\chi + \ddbar u$ is bounded from above.  Moreover,
 the equation (\ref{eqnma}) together with the arithmetic-geometric means inequality gives a lower bound for $\chi + \ddbar u$ away from zero.  Hence there is a uniform $C_0$ such that on $B_1$,
\begin{equation} \label{mau}
C_0^{-1} (\delta_{i\ov{j}})  \le 2\left(h_{i\ov{j}} +  u_{i\ov{j}} \right) \le C_0 (\delta_{i\ov{j}}),
\end{equation}
where $(\delta_{i\ov{j}})$ is the $n\times n$ identity matrix, considered as a Hermitian matrix.
Hence  from (\ref{eqi}) we have, for $x\in B_1$,
\begin{equation}
C_0^{-1} I_{2n} \le S(x) + T(D^2u(x), x) \le C_0 I_{2n}.
\end{equation}

We take the convex set $\E$ to be the set of matrices $N\in\Sym(2n)$ with
$$\quad C_0^{-1}I_{2n}\leq N \leq C_0I_{2n},$$
and note that this set is compact.
It is then immediate that
$$S(x) + T(D^2u(x), x) \in \E, \quad \textrm{for all } x \in B_1.$$

We define  $F(N,x)  = \det(N)^{\frac{1}{2n}}$ (independent of $x$) for all $N$ in a small neighborhood of $\E$, extend $F$ arbitrarily to all of $\Sym(2n)\times B_1$, and let $f=2e^{\psi/n}\det(g_{i\ov{j}})^{1/n} \in C^{\alpha_0}$.   Note that for a Hermitian matrix $H$,
$$\det (\iota(H)) = (\det H)^2.$$
Then since $u$ solves (\ref{eqnma}), we have that
\begin{equation}
\begin{split}
F(S(x) + T(D^2u(x),x),x) = {} & \det\left( \iota\left( 2(h_{i\ov{j}} + u_{i\ov{j}})\right)\right)^{1/2n} \\
= {} & \det\left(  2(h_{i\ov{j}} + u_{i\ov{j}})\right)^{1/n} \\
= {} &2e^{\psi/n} \det(g_{i\ov{j}})^{1/n} = f.
\end{split}
\end{equation}

It remains to check the conditions {\bf H1}-{\bf H3}.  For {\bf H1}.(1), note that $F_{ij}(N,x) = \frac{1}{2n} (\det N)^{1/2n} (N^{-1})_{ij}$ for $N\in\E$ and by definition of $\E$ it follows immediately that we have uniform ellipticity with $\lambda$ and $\Lambda$ depending only on $n$ and $C_0$. Concavity of $F$ in $\mathcal{E}$ is well-known (see e.g. \cite{CNS}), giving {\bf H1}.(2).  {\bf H1}.(3) is trivially satisfied.

For {\bf H2}.(3), observe that if $P \in \Sym(2n)$ is nonnegative then for all vectors $v \in \mathbb{R}^{2n}$
$$\langle T(P,x) v, v \rangle = \langle \proj(P) v, v \rangle= \frac{1}{2} \left( \langle Pv, v \rangle + \langle PJv, Jv \rangle \right) \ge 0,$$
and hence $T(P,x)\ge 0$.  Next,
\begin{equation} \label{from}
\frac{1}{2} \| P \| \le \| T(P,x) \| = \frac{1}{2} \sup_{ \| v \| =1} \left( \langle Pv, v \rangle + \langle PJv, Jv \rangle \right) \le \| P \|.
\end{equation}

The other conditions follow easily.  We can then apply Theorem \ref{Apply EK} and obtain $\| u\|_{C^{2,\alpha}(M)}\le C$, as required.

\bigskip
\noindent
\emph{The complex Hessian equations (\ref{eqnch})}.  Recall that if $\lambda = (\lambda_1, \ldots, \lambda_n) \in \mathbb{R}^n$, we define $\sigma_k(\lambda)$ to be the $k^{th}$ elementary symmetric polynomial
$$\sigma_k(\lambda) = \sum_{i_1 < \cdots < i_k} \lambda_{i_1} \cdots \lambda_{i_k}.$$
We can rewrite the equations (\ref{eqnch}) in terms of $\sigma_k$ as follows.
Write $T^{1,0}M$ for the $(1,0)$ part of the complexified tangent space of $M$ (that is, the span of $\frac{\partial}{\partial z^1}, \ldots, \frac{\partial}{\partial z^n}$ over $\mathbb{C}$).  Then we have an endomorphism
$$A: T^{1,0}M \rightarrow T^{1,0}M$$
given by the matrix $A^k_{\ i} = g^{k \ov{j}}(h_{i\ov{j}} + u_{i\ov{j}})$.  Namely, given $X= X^i \frac{\partial}{\partial z^i}$ we define $A(X) = A^k_{\ i} X^i \frac{\partial}{\partial z^k}$.
  The matrix $(A^k_{\ i})$ is Hermitian with respect to the inner product on $T^{1,0}M$ given by $(g_{i\ov{j}})$.  That is, for $X, Y \in T^{1,0}M$, we have
  $$\langle AX, Y \rangle_g = g_{i\ov{j}} A^i_{\ k} X^k \ov{Y^j} = (h_{i\ov{j}} + u_{i\ov{j}}) X^i \ov{Y^j}= g_{i\ov{j}} X^i \ov{A^j_{\ \ell} Y^{\ell}} = \langle X, AY \rangle_g.$$
Now let  $\lambda_1\geq \ldots\geq \lambda_n$ be the eigenvalues of $(A^k_{\ i})$, some of which could be negative.  Then the equation (\ref{eqnch}) can be written as
\begin{equation} \label{eqnch2}
\begin{split}
(\sigma_k(\lambda))^{1/k} = {} &\binom{n}{k}^{1/k} e^{\psi/k}\\
 \sigma_j(\lambda) >{} & 0, \quad \textrm{for } j=1, \ldots, k,
 \end{split}
 \end{equation}
for $\lambda = (\lambda_1, \ldots, \lambda_n)$. Indeed, this is easy to see after picking coordinates for which $(g_{i\ov{j}})$ is the identity, and $(h_{i\ov{j}} + u_{i\ov{j}})$ is diagonal.

To cast \eqref{eqnch2} in the form \eqref{me}, we take a chart in $M$ which is identified with
the unit ball $B_1$ in $\mathbb{C}^n$ with local coordinates $(z^1,\dots,z^n)$.
We let $S(x)=\iota(2h_{i\ov{j}}(x))$ and $T(N, x) =  \proj (N)$, so that $T(D^2u(x), x) = \iota( 2u_{i\ov{j}}(x))$.

Next observe that the assumption $\Delta u \le K$ implies that the eigenvalues $\lambda_j$ of $(A^k_{\ i})$ satisfy $\sigma_1(\lambda)=\lambda_1+\dots+\lambda_n\leq K'$.
We have
$$\frac{\de\sigma_k(\lambda)}{\de \lambda_i}=\sigma_{k-1}(\lambda|i),$$
where $\sigma_{p}(\lambda|i)$ is the $p^{th}$ elementary symmetric function of the $n$-tuple $(\lambda_1,\dots,\lambda_n)$ where we set $\lambda_i=0$.
We will need the following algebraic lemma, which can be easily extracted from the literature (cf. \cite{WaX}).
\begin{lemma}\label{symm}
Let $\lambda=(\lambda_1,\dots,\lambda_n)$ be an $n$-tuple of real numbers which satisfy
\begin{equation} \label{assn1}
\sigma_j(\lambda) >{}  0, \quad \textrm{for } j=1, \ldots, k,
\end{equation}
\begin{equation} \label{assn2}
(\sigma_k(\lambda))^{1/k} \geq {} A^{-1}>0,
\end{equation}
\begin{equation} \label{assn3}
\sigma_1(\lambda)\leq A,
\end{equation}
for some $2\leq k\leq n$, and a constant $A>0$. Then there exists a constant $K_0>0$, which depends only on $A,n,k$, such that
\begin{equation} \label{concl1}
(\sigma_j(\lambda))^{1/j} \geq {} K_0^{-1}>0,\quad \textrm{for } j=1, \ldots, k,
\end{equation}
\begin{equation} \label{concl2}
K_0^{-1}\leq \sigma_{j-1}(\lambda|i)\leq K_0,\quad \textrm{for } i=1,\ldots,n, \textrm{and } j=2,\ldots,k,
\end{equation}
\begin{equation} \label{concl3}
-K_0\leq \lambda_j\leq K_0, \quad \textrm{for } j=1, \ldots, n.
\end{equation}
\end{lemma}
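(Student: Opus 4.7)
The proof rests on classical Newton--Maclaurin inequalities for the G\r{a}rding cone $\Gamma_k := \{\mu \in \mathbb{R}^n : \sigma_j(\mu) > 0,\ j = 1,\ldots,k\}$ together with a compactness argument. For (\ref{concl1}) I would simply invoke the Newton--Maclaurin inequality
$$\left(\frac{\sigma_j(\lambda)}{\binom{n}{j}}\right)^{1/j} \geq \left(\frac{\sigma_k(\lambda)}{\binom{n}{k}}\right)^{1/k}, \quad 1 \le j \le k,\ \lambda \in \Gamma_k,$$
which, combined with the hypothesis $\sigma_k(\lambda)^{1/k} \geq A^{-1}$, immediately yields (\ref{concl1}) with a constant depending only on $A, n, k$.

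For (\ref{concl3}), I exploit $k \geq 2$, so $\sigma_2(\lambda) > 0$; rewriting this as $|\lambda|^2 = \sigma_1(\lambda)^2 - 2\sigma_2(\lambda) < \sigma_1(\lambda)^2 \leq A^2$ gives $|\lambda_j| < A$ for every $j$. The upper bound in (\ref{concl2}) is then immediate, since each $\sigma_{j-1}(\lambda|i)$ is a sum of $\binom{n-1}{j-1}$ products of $j-1$ of the bounded $\lambda_\ell$'s.

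For the lower bound in (\ref{concl2}) I would argue by compactness. The set of $\lambda$ satisfying the hypotheses (\ref{assn1})--(\ref{assn3}), together with the Maclaurin lower bounds $\sigma_j(\lambda) \geq c_j > 0$ from Step 1 and the eigenvalue bound $|\lambda_\ell| \le A$ from Step 2, is closed and bounded in $\mathbb{R}^n$, hence compact, and contained in $\Gamma_k$. The key technical ingredient is the standard G\r{a}rding-cone fact that $\sigma_{j-1}(\lambda|i) > 0$ for every $i$ and every $2 \le j \le k$ whenever $\lambda \in \Gamma_k$; continuity together with compactness then yields a positive lower bound depending only on $A, n, k$.

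The main obstacle I foresee is this positivity statement. The case $j=k$ is the classical assertion that $\nabla \sigma_k(\lambda)$ lies in the positive orthant on $\Gamma_k$, which follows from G\r{a}rding's inequality (or concretely, by ordering $\lambda_1 \geq \cdots \geq \lambda_n$, using that $\lambda_1,\ldots,\lambda_{k-1} > 0$ on $\Gamma_k$, and expanding $\sigma_{k-1}(\lambda|i)$ coordinate by coordinate). For $j<k$ one reduces to the previous case by observing that removing the largest eigenvalue of an element of $\Gamma_k$ produces an element of $\Gamma_{k-1}$ in $\mathbb{R}^{n-1}$, and iterating. A clean presentation of these manipulations is in \cite{WaX}, which I would cite rather than redo from scratch.
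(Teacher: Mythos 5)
Your proof is correct, and it takes a genuinely different route from the paper for two of the three conclusions. For \eqref{concl1} both arguments use the Maclaurin inequality identically. Your derivation of \eqref{concl3} is more direct: since $k\geq 2$ we have $\sigma_2(\lambda)>0$, so $\sum_i\lambda_i^2=\sigma_1(\lambda)^2-2\sigma_2(\lambda)<\sigma_1(\lambda)^2\leq A^2$, giving $|\lambda_j|<A$ at once. The paper instead obtains \eqref{concl3} last, as a corollary of \eqref{concl2}, via $\lambda_i=\sigma_1(\lambda)-\sigma_1(\lambda|i)$. Having the eigenvalue bound up front also lets you bound each $\sigma_{j-1}(\lambda|i)$ above by $\binom{n-1}{j-1}A^{j-1}$ directly, where the paper uses the identity $\sum_i\sigma_{j-1}(\lambda|i)=(n-j+1)\sigma_{j-1}(\lambda)$ plus Maclaurin. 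The main divergence is in the lower bound of \eqref{concl2}: the paper is fully quantitative, combining $\prod_i\sigma_{j-1}(\lambda|i)\geq C_{n,j}\,\sigma_j(\lambda)^{n(j-1)/j}$ (Proposition 2.1(4) of \cite{WaX}) with the upper bound on each factor; you instead use a compactness argument on the closed, bounded set cut out by the bounds from Steps 1 and 2, which sits inside $\Gamma_k$ and on which the continuous function $\min_i\sigma_{j-1}(\lambda|i)$ is strictly positive. Both are valid; the paper's route yields explicit constants while yours is shorter but nonconstructive, and both rely on the same underlying G{\aa}rding-cone positivity from \cite{WaX}. One cosmetic remark: your sketch of why $\sigma_{j-1}(\lambda|i)>0$ for $j<k$ (remove the largest eigenvalue and iterate) is slightly off as stated---removing $\lambda_1$ controls $\sigma_p(\lambda|1)$, not $\sigma_p(\lambda|i)$ for general $i$; the clean statement is that $\lambda|i\in\Gamma_{k-1}$ for every $i$ whenever $\lambda\in\Gamma_k$, which is precisely \cite[Proposition 2.1(2)]{WaX}, and since you defer to that reference the gap is immaterial.
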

\begin{proof}
Inequality \eqref{concl1} just follows from \eqref{assn1}, \eqref{assn2} and the Maclaurin inequality
$$\sigma_j(\lambda)^{\frac{1}{j}}\geq  C_{n,j,k}\sigma_k(\lambda)^{\frac{1}{k}}\geq C_{n,j,k}A^{-1}.$$
Next we show \eqref{concl2}. Assumption \eqref{assn1} implies that we have for $2 \le j \le k$,
$$\sigma_{j-1}(\lambda|i)\geq \sigma_{j-1}(\lambda|1)>0,$$
see e.g. \cite[Proposition 2.1 (2)]{WaX}.
It is also easy to check (\cite[Proposition 2.1 (1)]{WaX}) that for $2 \le j \le k$,
$$\sum_{i=1}^n\sigma_{j-1}(\lambda|i)=(n-j+1)\sigma_{j-1}(\lambda),$$
hence
\begin{equation}\label{uno}
\sigma_{j-1}(\lambda|i)\leq (n-j+1)\sigma_{j-1}(\lambda)\leq C_{n,j}\sigma_1(\lambda)^{j-1}\leq C_{n,j}A^{j-1},
\end{equation}
using the Maclaurin inequality and \eqref{assn3}.
Using \eqref{assn2} we also have
\begin{equation}\label{due}
\prod_{i=1}^n\sigma_{j-1}(\lambda|i)\geq C_{n,j}\sigma_{j}(\lambda)^{\frac{n(j-1)}{j}} \geq C_{n,j,k} \sigma_k(\lambda)^{\frac{n(j-1)}{k}} \geq C_{n,j,k}A^{-n(j-1)},
\end{equation}
where the first inequality is \cite[Proposition 2.1 (4)]{WaX}.
Combining \eqref{uno} and \eqref{due}, we get \eqref{concl2}.

Finally, we prove \eqref{concl3}. Since $k\geq 2$, inequalities \eqref{concl2} hold with $j=2$,
$$K_0^{-1}\leq \sigma_{1}(\lambda|i)\leq K_0.$$
But we clearly have
$$\lambda_i=\sigma_1(\lambda)-\sigma_1(\lambda|i),$$
and the bound \eqref{concl3} follows.
\end{proof}
Thanks to \eqref{eqnch2}, the eigenvalues $\lambda_j$ of $(A^k_{\ i})$ satisfy the hypotheses of Lemma \ref{symm}, for some uniform constant $A$ (on the whole of $B_1$). Let $K_0$ be the constant that
we obtain from Lemma \ref{symm}.

Given a matrix $N\in\Sym(2n)$, then $\iota^{-1}(p(N))$ is an $n\times n$ Hermitian matrix. Denote by
$A(N)^k_{\ i}=g^{k \ov{j}}(0) \left(\iota^{-1}(p(N))\right)_{i\ov{j}}$, which is Hermitian with respect to the inner
product $g_{i\ov{j}}(0)$, with eigenvalues $\lambda(0)=(\lambda_1(0),\dots,\lambda_n(0))$. Let $\E$ be the convex set of matrices $N\in\Sym(2n)$ with
\begin{equation}\label{tre}
\begin{split}
-2K_0 \leq \lambda_i(0) \leq {} &2 K_0, \ \textrm{for } 1 \le i \le n \\
 (\sigma_j(\lambda(0))^{1/j} \ge {} & (2K_0)^{-1}, \ \textrm{for } 1\le j \le k.
 \end{split}
\end{equation}
The set $\E$ is compact, and the concavity of $\sigma_j^{1/j}$   (see \cite{CNS}, for example) implies that $\E$ is convex.

Now write $\lambda(x)= (\lambda_1(x),\dots,\lambda_n(x))$ for the eigenvalues of $A(N,x)^k_{\ i}=g^{k \ov{j}}(x) \left(\iota^{-1}(p(N))\right)_{i\ov{j}}$.
For $N$ in a sufficiently small neighborhood of $\E$ we define
$$F(N,x)=(\sigma_k(\lambda(x)))^{1/k},$$
and extend $F$ arbitrarily to $\Sym(2n)\times B_1$.
Since $g_{i \ov{j}}(x)$ is a continuously-varying Hermitian matrix, for all $N \in \E$ we have that $\lambda(x)$ satisfies
\begin{equation}\label{tre2}
\begin{split}
-4K_0 \leq \lambda_i(x) \leq {} & 4K_0, \ \textrm{for } 1 \le i \le n \\
 (\sigma_j(\lambda(x))^{1/j} \ge {} & (4K_0)^{-1}, \ \textrm{for } 1\le j \le k.
 \end{split}
\end{equation}
 as long as $x$ is sufficiently close to $0$.  If necessary, we replace the ball $B_1$ with a smaller ball $B_r$ (for some uniform $0<r<1$) to ensure that this holds.  Of course, by a simple covering argument this does not affect the conclusion.

From what we have shown above, if $u$ solves \eqref{eqnch} on $M$ then the eigenvalue $\lambda(x)$ corresponding to the matrix $N=S(x)+T(D^2 u(x),x)$ satisfies (\ref{tre2}) with $4K_0$ replaced by $K_0$. Again using the continuity of
$g_{i \ov{j}}(x)$, it follows from the definition of $\mathcal{E}$ that, after possibly shrinking $r$, we have
$$S(x)+T(D^2 u(x),x)\in\E,$$
for all $x\in B_r$.

By the concavity of $\sigma_k^{1/k}$, we see that for each fixed $x$, $F(N,x)$ is concave in $\E$.  Moreover, thanks to Lemma \ref{symm},  (\ref{tre2}) implies bounds of the form
$$(K'_0)^{-1}\leq \sigma_{k-1}(\lambda|i)\leq K'_0, \quad 1 \le i \le n,$$
so we see that for $N \in \E$, $F(N,x)$ is uniformly elliptic.
Since $g_{i \ov{j}}(x)$ is a smoothly-varying Hermitian matrix, it is easy to see that, since $\E$ is compact,
$$|F(N,x)-F(N,y)|\leq K|x-y|^\beta,$$
for all $N\in\E$ and all $x,y\in B_1$, giving {\bf H1}.(3).

We can then apply Theorem \ref{Apply EK} and obtain $\| u\|_{C^{2,\alpha}(M)}\le C$, as required.

\bigskip
\noindent
\emph{The complex $\sigma_n/\sigma_k$ equations (\ref{eqnd})}.  The argument for the equations (\ref{eqnd}) is similar, and slightly simpler, than the argument given above for the complex Hessian equations.  Indeed, using the same notation as there, we may write (\ref{eqnd}) in terms of the eigenvalues $\lambda=(\lambda_1, \ldots, \lambda_n)$ as
\begin{equation} \label{eqnd2}
\left( \frac{\sigma_n}{\sigma_k} \right)^{1/(n-k)} = e^{-\psi/(n-k)} \binom{n}{k}^{-1/(n-k)}, \quad \lambda_1 \ge \cdots \ge  \lambda_n >0.
\end{equation}
From the assumption $\Delta u \le K$ we have $\sigma_1(\lambda)\le K'$ and hence all the eigenvalues $\lambda_i$ are bounded from above by $K'$.  Moreover, the equation (\ref{eqnd2}) implies lower bounds for $\lambda_i$ away from zero.  Indeed, from (\ref{eqnd2}) we have an upper bound for
$$ \frac{ \sum_{i_1 < \cdots < i_k} \lambda_{i_1} \cdots \lambda_{i_k}}{\lambda_1 \lambda_2 \cdots \lambda_n},$$
and since each term in the sum is positive we obtain in particular,
$$\frac{1}{\lambda_{k+1} \lambda_{k+2} \cdots \lambda_n} \le C.$$
Hence for each $i=1, \ldots, n$,
$$\lambda_i \ge \lambda_n \ge \frac{1}{C\lambda_{k+1} \cdots \lambda_{n-1}} \ge c >0,$$
for a uniform $c>0$, since $\lambda_{k+1}, \cdots, \lambda_{n-1}$ are all uniformly bounded from above.

Now define $\E$ to be the compact convex set of matrices $N \in \Sym(2n)$ whose eigenvalues $\lambda_i(0)$ (with respect to $g(0)$) satisfy
$$K_0^{-1} \le \lambda_i(0) \le K_0,$$
for a sufficiently large $K_0$.
The operator
$$F(N, x) = \left( \frac{\sigma_n(\lambda(x))}{\sigma_k(\lambda(x))}\right)^{1/(n-k)}$$ is concave (see e.g. \cite{Sp}) and uniformly elliptic on $\E$ (after possibly shrinking the neighborhood), and hence we can apply Theorem
\ref{Apply EK} in the same way as for the complex Hessian equations above.

\begin{remark} In fact, by a similar argument, Theorem \ref{theoremCG} applies to the more general complex $\sigma_\ell/\sigma_k$ equations
\begin{equation} \label{eqnsigma}
\begin{split}
(\chi + \ddbar u)^k \wedge \omega^{n-k} = {} & e^{\psi} (\chi + \ddbar u)^\ell\wedge\omega^{n-\ell} \\
(\chi + \ddbar u)^j\wedge \omega^{n-j} > {} & 0, \quad \textrm{for } j=1, \ldots, \ell,
\end{split}
\end{equation}
where $1<k<\ell\leq n$.
\end{remark}

\bigskip
\noindent
\emph{The Monge-Amp\`ere equation for $(n-1)$-psh functions (\ref{eqnnm1})}.   First note that the assumption $\Delta u \le K$ implies that the positive definite $(1,1)$ form
$$\chi + \frac{1}{n-1} \left(  (\Delta u) \omega - \ddbar u \right) >0$$
has bounded trace with respect to $g$.  The equation (\ref{eqnnm1}) then implies that this positive definite $(1,1)$ form is uniformly bounded from above, and below away from zero.

We define $S(x) = \iota (2 h_{i\ov{j}}(x))$.  We let $g(x) = \iota (2 g_{i\ov{j}}(x))$
be the Riemannian metric associated to $(g_{i\ov{j}})$ and we define
$$T(N,x)=\frac{1}{n-1}\left(\frac{1}{2}\trace(g(x)^{-1} \proj(N)) g(x)-\proj(N)\right).$$
As with the complex Monge-Amp\`ere equation above, define $F$ by
$F(N,x)  = \det(N)^{\frac{1}{2n}}$ (independent of $x$) for $N$ in a neighborhood of $\E$, extend $F$ arbitrarily to $\Sym(2n)\times B_1$, and observe that equation (\ref{eqnnm1}) is equivalent to
$$F(S(x) + T(D^2 u(x), x),x) = 2 e^{\psi/n} \det (g_{i\ov{j}})^{1/n} = : f.$$

Take the convex set $\E$ to be the set of matrices $N\in\Sym(2n)$ with
$$K_0^{-1}I_{2n}\leq N\leq K_0I_{2n},$$
for a sufficiently large constant $K_0$.

We check that the hypotheses of Theorem \ref{Apply EK} are satisfied. Clearly  {\bf H1},  {\bf H3} and  {\bf H2}.(2) hold.
For {\bf H2}.(1) we have
\[
\begin{split}
\lefteqn{
\norm{ T (N, x) -  T(N, y) }  } \\& \leq \frac{1}{(n-1)}\left( \frac{1}{2}\trace  ((g^{-1}(x)-g^{-1}(y))\proj(N))g(x) - \frac{1}{2}\trace(g^{-1}(y)\proj(N))(g(y)-g(x))\right) \\
& \leq K \abs{x -y}^\beta \norm{N} .
\end{split}
\]
Lastly, we check {\bf H2}.(3). If $P \geq 0$, then $\frac{1}{2}\trace(g(x)^{-1} \proj(P)) g(x)-\proj(P) \geq 0,$
because at any $x$ we can choose a basis such that $g(x)=I$ while $\proj(P)$ is diagonal with eigenvalues $\lambda_1,\lambda_1,\dots,\lambda_n,\lambda_n\geq 0$, and then
the eigenvalues of $\frac{1}{2}\trace(g(x)^{-1} \proj(P)) g(x)-\proj(P)$ are $\sum_{i\neq j}\lambda_i\geq 0$. From this we also see that
\[
\frac{1}{n-1} \norm{\proj(P)}\leq \frac{1}{n-1}\left\| \frac{1}{2}\trace(g(x)^{-1} \proj(P)) g(x)-\proj(P)\right\| \leq  \norm{\proj(P)}.
\]
But $P\geq 0$ implies from (\ref{from}) that $\|\proj(P)\|\leq \|P\|\leq 2\|\proj(P)\|$, which gives {\bf H2}.(3).

We can then apply Theorem \ref{Apply EK} and obtain $\| u \|_{C^{2, \alpha}(M)} \le C$.

\bigskip
\noindent
\emph{The $(n-1)$-plurisubharmonic version of the complex Hessian and $\sigma_n/\sigma_k$ equations \eqref{eqnnm2}}.
This case follows easily by combining the arguments for the previous three examples,
choosing $S(x), T(N,x)$ as in the discussion of \eqref{eqnnm1}, and $F(N,x), \mathcal{E}$ as in the discussions of \eqref{eqnch} and \eqref{eqnd} respectively.

\bigskip
\noindent
\emph{The almost complex case (\ref{eqnac}).}
Here we assume more generally that $(M,J)$ is an almost-complex manifold. Then one can define
$(dJdu)^{(1,1)}$, which takes the place of $\ddbar u$. The convention we use for the action of $J$ on $1$-forms is
$J\alpha(X):=-\alpha(JX)$, for any $1$-form $\alpha$ and vector $X$. With this convention, we have $(dJdu)^{(1,1)}=2\ddbar u$
when $J$ is integrable. For notational convenience, in the following we will work with $(dJdu)^{(1,1)}$ instead of $\frac{1}{2}(dJdu)^{(1,1)}$, which will not affect any of the results.
We first deal with the case of the Monge-Amp\`ere equation (\ref{eqnma}) in this setting.

In this case, we define the Laplacian of $u$ to be
$$\Delta u = \frac{n  \omega^{n-1} \wedge (dJdu)}{\omega^n}=\frac{n  \omega^{n-1} \wedge (dJdu)^{(1,1)}}{\omega^n},$$
which differs from the usual Riemannian Laplacian by a first order term.

We fix a point $p\in M$ and choose a real coordinate system
 $x^1, \ldots, x^{2n}$ centered at $p$.  Writing $J= (J^k_{\ \ell})$ we see that
\begin{equation}\label{dJ}
dJdu = \left(- J^k_{\ \ell} \partial_k \partial_{i} u  - (\partial_{i} J^k_{\ \ell} ) \partial_k u \right) dx^{i} \wedge dx^{\ell}.
\end{equation}
Its $(1,1)$ part is given by
\[
\begin{split}
(dJdu)^{(1,1)} = {} & \frac{1}{2} \left(
- J^k_{\ \ell} \partial_k \partial_{i} u - J^a_{\ i} J^b_{\ \ell} J^k_{\ b} \partial_k \partial_a u  \right. \\ & \left. - (\partial_{i} J^k_{\ \ell} ) \partial_k u - J^a_{\ i} J^b_{\ \ell} (\partial_a J^k_{\ b}) \partial_k u \right) dx^i \wedge dx^{\ell}.
\end{split}
\]
For later use, we note that
\begin{equation}\label{20}
(dJdu)^{(2,0)+(0,2)} = \frac{1}{2} \left( - (\partial_{i} J^k_{\ \ell} ) \partial_k u + J^a_{\ i} J^b_{\ \ell} (\partial_a J^k_{\ b}) \partial_k u \right) dx^i \wedge dx^{\ell}.
\end{equation}
Associated to $(dJdu)^{(1,1)}$ is a symmetric bilinear form $H(u)$ defined by $H(u) (X, Y) = (dJdu)^{(1,1)}( X, J Y)$.  Compute
$$(H(u))_{ij} := \frac{1}{2} \left( \partial_i \partial_j u + J^k_{\ i} J^{\ell}_{\ j} \partial_k \partial_{\ell} u \right) + E_{ij},$$
where
$$E_{ij} = \frac{1}{4} \left( - J^{\ell}_{\ j} (\partial_i J^k_{\ \ell} ) - J^{\ell}_{\ i} (\partial_j J^k_{\ \ell})  + J^{\ell}_{\ j} (\partial_{\ell} J^k_{\ i} )  + J^{\ell}_{\ i } (\partial_{\ell} J^k_{\ j}) \right) \partial_k u.$$
Or, in other words,
$$H(u)(x)=\proj(D^2 u(x),x)+E,$$
where the ``error'' $E$ depends linearly on $Du$ (cf. \cite[Proposition 4.2]{HL}). Here we denote by
$$\proj(N,x)=\frac{1}{2} (N + J^T(x)NJ(x)).$$
We set $T(N,x) = \proj(N,x)$.  Observe that $T$ satisfies the properties {\bf H2}.(1) and {\bf H2}.(2).  Moreover,
we may pick coordinates $x^1, \ldots, x^{2n}$ so that at the origin,
$$  J(0)= \begin{pmatrix}
0 & -I_n \\
I_n & 0
\end{pmatrix},$$
the standard complex structure.  By shrinking the neighborhood if necessary, $J(x)$ is only a small perturbation of $J(0)$ and so $T(N,x)$ also satisfies the property {\bf H2}.(3) by the same argument as above.

We define $S$ and $F$ in the obvious way,  and thus we have essentially the same setup as in the complex Monge-Amp\`ere equation (\ref{eqnma}) with the exception of the extra term $E$.  However, this term is linear in $Du$.  The uniform bounds that we assume on $u$ and $\Delta u$ imply a $C^{1,\beta}$ bound for $u$ for any $0<\beta<1$.  Hence the $E$ term is bounded in $C^{\beta}$ and can then be absorbed in the term $\chi$.

The arguments for the other equations (\ref{eqnch}), (\ref{eqnd}), (\ref{eqnnm1}) and (\ref{eqnnm2}) in the almost complex case follow similarly.

\section{Evans-Krylov Theory on Euclidean Space} \label{sectionc}

In this section, we recall Evans-Krylov theory on Euclidean space and its perturbation version due to Caffarelli. Again in this section $B_1$ will denote the unit ball in $\mathbb{R}^{2n}$.

First, recall the following version of Evans-Krylov theorem (see \cite[Theorem 6.6]{CC} and also \cite{CS, Ev, Kr}).
\begin{thm}
\label{EK local}
Assume that $F :\Sym(2n) \rightarrow \R$ is a concave function and uniformly elliptic, i.e.,
\[
\lambda \norm{P} \leq F(N +P ) - F(N )  \leq \Lambda \norm{P} ,\quad \forall N , P \in \Sym(2n) , P \geq 0.
\]
If $F(0) =0 $ and a continuous function $u: B_1 \rightarrow \R$ satisfies
\[
F (D^2 u )  =0 ,\quad \text{ in the viscosity sense.}
\]
Then $u \in C^{2,\beta} (B_{1/2})$ and
\[
\norm{u}_{C^{2,\beta} (B_{1/2})} \leq C \norm{u}_{L^{\infty}(B_1)}
\]
where $\beta \in (0,1)$ and $C$ only depend on $n, \lambda, \Lambda$.
\end{thm}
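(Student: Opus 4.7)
The plan is to run the classical Evans--Krylov argument: use the concavity of $F$ to show that second-difference quotients of $u$ are viscosity subsolutions of the Pucci extremal equation, and then apply the Krylov--Safonov weak Harnack inequality to extract an oscillation decay estimate for these second differences, which iterates to a $C^{2,\beta}$ bound on $u$.

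Fix a unit vector $e\in\R^{2n}$ and a small $h>0$, and on a slightly smaller ball consider the second-difference quotient
\[
v_h(x) = \frac{u(x+he)+u(x-he)-2u(x)}{h^2}.
\]
Since $F(D^2u(x\pm he))=F(D^2u(x))=0$, concavity of $F$ applied to the average of $D^2u(x+he)$ and $D^2u(x-he)$ yields
\[
F\Bigl(D^2u(x)+\tfrac{h^2}{2}D^2 v_h(x)\Bigr) - F(D^2u(x)) \geq 0,
\]
and by uniform ellipticity this implies $\mathcal{M}^+(D^2 v_h)\geq 0$ in the viscosity sense. Setting $M_h:=\sup_{B_r}v_h$, the function $M_h - v_h$ is then a nonnegative Pucci supersolution, and the Krylov--Safonov weak Harnack inequality gives
\[
\left(\frac{1}{|B_{r/2}|}\int_{B_{r/2}}(M_h - v_h)^{p_0}\,dx\right)^{1/p_0} \leq C\Bigl(M_h - \sup_{B_{r/4}} v_h\Bigr),
\]
for some $p_0>0$ and $C=C(n,\lambda,\Lambda)$.

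The next step is to upgrade this one-directional estimate to an oscillation decay for all pure second derivatives simultaneously. Concavity of $F$ also yields the pointwise linearization inequality
\[
F_{ij}(D^2u(y))\bigl(u_{ij}(x) - u_{ij}(y)\bigr) \geq F(D^2u(x)) - F(D^2u(y)) = 0,
\]
so for any fixed $y$ the quantity $F_{ij}(D^2u(y))u_{ij}(\cdot)$ attains its minimum at $x=y$. Choosing a finite collection of directions $e_1,\dots,e_N\subset\R^{2n}$ so that the tensors $e_k\otimes e_k$ span $\Sym(2n)$ quantitatively, and combining this algebraic identity with the weak Harnack estimate applied to the supersolutions $M_{h,e_k}-v_{h,e_k}$, one extracts the geometric oscillation decay
\[
\mathrm{osc}_{B_{r/2}}\, v_{h,e_k} \leq \gamma\,\mathrm{osc}_{B_r}\, v_{h,e_k}, \qquad k=1,\dots,N,
\]
with $\gamma=\gamma(n,\lambda,\Lambda)<1$, uniformly in $h$. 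Iterating dyadically on the scale $r$ and then letting $h\to 0$ yields the $C^{2,\beta}$ estimate for $u$ with the asserted dependence of constants.

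The main obstacle is justifying these manipulations for a merely continuous viscosity solution. The standard resolution is to regularize $u$ by sup- and inf-convolutions $u^\varepsilon, u_\varepsilon$: these are semiconcave respectively semiconvex, satisfy a slightly perturbed equation in the viscosity sense, and retain the second-difference Pucci inequality described above. Alternatively, one may first invoke Caffarelli's interior $W^{2,p}$ estimate to guarantee that $u$ admits pointwise quadratic expansions almost everywhere, so that the concavity-based identities hold rigorously at these points and can be combined with the Pucci machinery. Either route produces the $C^{2,\beta}$ estimate for the regularizations with constants depending only on $n,\lambda,\Lambda$, and passing to the limit recovers the claimed bound on $u$.
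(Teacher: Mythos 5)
The paper does not prove Theorem~\ref{EK local}: it is imported verbatim as a black box from \cite[Theorem 6.6]{CC} (with the further references \cite{CS, Ev, Kr}). What you have written is a sketch of precisely the classical Evans--Krylov argument that underlies those citations: second-difference quotients of $u$ are Pucci subsolutions by concavity, the weak Harnack inequality of Krylov--Safonov is applied to $M_h - v_h$, and the concavity-derived linearization $F_{ij}(D^2u(y))(u_{ij}(x)-u_{ij}(y))\ge 0$ is used to couple the different directions and squeeze out a geometric oscillation decay, which iterates to H\"older continuity of $D^2u$. Your outline is faithful to that argument, so there is nothing to compare with the paper itself; the comparison is with the cited literature, and there you are essentially reproducing the standard proof.

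Two cautions. First, the step you label ``the main obstacle'' is indeed where all the work is hidden, and your first remedy is circular: Caffarelli's interior $W^{2,p}$ theory for $F(D^2u,x)=f$ is built on already having interior $C^{1,1}$ (i.e.\ Evans--Krylov) estimates for the frozen operator $F(\cdot,x_0)$, so one cannot invoke it here to regularize $u$ before proving Evans--Krylov. The sup/inf-convolution route (or, as in \cite{CC}, reduction to the $C^{1,1}$ case via the $W^{2,\varepsilon}$ estimate which does \emph{not} require concavity) is the correct one, and this is the route \cite[Theorem 6.6]{CC} takes. Second, the passage ``combining this algebraic identity with the weak Harnack estimate \ldots one extracts the geometric oscillation decay'' compresses the genuinely delicate part of Evans--Krylov --- one must choose the finite set of directions so that the linearization inequality at a near-extremal point forces at least one of the $v_{h,e_k}$ to be uniformly close to its supremum on a set of positive measure, which is what feeds the weak Harnack lemma. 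A complete proof must spell this out; as a blind reconstruction of the cited theorem, however, your sketch identifies all the right ingredients.
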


For our application, we need a more general version of the above theorem. We first introduce the following definition.

\begin{definition}
Let $\mathscr{F}_{2n} (\lambda, \Lambda, K, \gamma)$ be a family of functions $\Phi : \Sym(2n)  \times B_1 \rightarrow \R$ depending on positive constants $\lambda, \Lambda, K$ and $\gamma \in (0,1)$. An element $\Phi  \in \mathscr{F}_{2n} (\lambda, \Lambda, K, \gamma)$ satisfies the following conditions:

\begin{itemize}
\item \emph{Fiberwise concavity.}  For each  fixed $x \in B_1$,
\[
\Phi \smp{ \frac{A+ B}{2} , x }\geq  \frac{1}{2} \Phi(A, x ) + \frac{1}{2} \Phi(B,x), \quad \textrm{for all } A, B \in \Sym(2n).
\]

\item \emph{Uniform Ellipticity.} For all $x\in B_1$ and all $N,P \in \Sym(2n)$ with $P \ge 0$ we have
\[
 \lambda \norm{P} \leq \Phi (N +P ,  x) - \Phi(N,  x) \leq \Lambda \norm{P}.
\]

\item \emph{H\"older bound in $x$}. For all $x,y \in B_1$ and all $N \in \Sym(2n)$,
\[
 \frac{|\Phi(N, x)  - \Phi(N,  y)|}{\|N\| +1} \leq K \abs{x-y}^{\gamma}.\]

\end{itemize}
\end{definition}

\begin{remark}
If $\Phi \in \mathscr{F}_{2n} (\lambda, \Lambda, K, \gamma)$ and the equation $\Phi(D^2 u(x) , x) = 0$ is a linear equation, then the linear equation takes the form
\[
\sum_{i,j} a_{ij}(x) u_{ij}(x)  =0
\]
with $\norm{a_{ij} (x) }_{C^{\gamma } } \leq K$ and the eigenvalues of $(a_{ij}(x))$ lie between $\lambda$ and $\Lambda/2n$.
\end{remark}

Now, we state the Evans-Krylov theorem for $\Phi \in \mathscr{F}_{2n} (\lambda, \Lambda, K, \gamma)$.

\begin{thm}
\label{EKC local}
Assume that $\Phi   \in \mathscr{F}_{2n} (\lambda, \Lambda, K, \gamma)$ and $f \in C^{\alpha_0} (B_1)$.
If $u \in C^2(B_1)$ satisfies
\[
\Phi(D^2 u(x) ,  x) = f(x)  \quad \text{ in } B_1
\]
then $ u \in C^{2,\alpha } (B_{1/2}) $ and
\[
\norm{u }_{C^{2,\alpha} (B_{1/2})} \leq C
\]
where $C, \alpha$ depend only on $\alpha_0, K, \gamma, n, \lambda, \Lambda, \| f\|_{C^{\alpha_0}(B_1)}, \| u\|_{L^{\infty}(B_1)}$ and $\Phi(0,0)$.
\end{thm}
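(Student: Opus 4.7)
The plan is to apply Caffarelli's perturbation theory in the spirit of \cite{Ca}, reducing everything to the constant-coefficient case provided by Theorem \ref{EK local}. Fix any $x_0\in B_{1/2}$ and consider the frozen operator $\Phi_0(N):=\Phi(N,x_0)$, which is concave and uniformly elliptic on all of $\Sym(2n)$. Rewriting the equation as
\[
\Phi_0(D^2 u(x))=f(x)+\bigl[\Phi_0(D^2 u(x))-\Phi(D^2 u(x),x)\bigr]=:g(x),
\]
the H\"older bound in $x$ gives $|g(x)-g(x_0)|\le |f(x)-f(x_0)|+K|x-x_0|^\gamma(\|D^2u(x)\|+1)$, so that on small balls around $x_0$, $u$ almost solves a concave, uniformly elliptic, constant-coefficient PDE. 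For such PDEs Theorem \ref{EK local} gives an interior $C^{2,\beta}$ estimate, hence a quadratic approximation: if $\Phi_0(D^2 v)=c$ on $B_r$, then there is a quadratic polynomial $P$ with $\|v-P\|_{L^\infty(B_{\rho r})}\le C\rho^{2+\beta}r^2\|v\|_{L^\infty(B_r)}$ for $\rho\in(0,1/2)$.

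The core of the proof is a standard dyadic iteration. I would prove, by induction on $k$, the existence of quadratic polynomials $P_k$ with $\|D^2 P_k\|$ uniformly bounded in $k$ such that on $B_{r_k}(x_0)$ with $r_k=\mu^k$ (for a small $\mu\in(0,1)$ chosen below),
\[
\|u-P_k\|_{L^\infty(B_{r_k}(x_0))}\le M\,r_k^{2+\alpha},\qquad \alpha:=\tfrac{1}{2}\min(\alpha_0,\gamma,\beta).
\]
At the inductive step, rescale by setting $w(y):=r_k^{-(2+\alpha)}(u(x_0+r_k y)-P_k(x_0+r_k y))$, which is bounded by $M$ on $B_1$ and satisfies a rescaled equation of the same form with a new right-hand side $\tilde g$ whose oscillation on $B_1$ is $O(r_k^{\min(\alpha_0,\gamma)})\cdot(\|D^2 w\|+1)$ after the scaling. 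Caffarelli's approximation lemma (\cite{Ca}; see also \cite[Chap.~8]{CC}) produces a solution $\tilde v$ of the homogeneous frozen equation $\Phi_0(D^2\tilde v)=\mathrm{const}$ on $B_{1/2}$ with $\|w-\tilde v\|_{L^\infty(B_{1/2})}$ as small as we wish, and Theorem \ref{EK local} applied to $\tilde v$ produces the quadratic correction that becomes $P_{k+1}$. Choosing $\mu$ small enough forces the geometric decay, and summing the increments $\|D^2 P_{k+1}-D^2 P_k\|$ shows that they form a convergent series whose limit is $D^2 u(x_0)$. Since $x_0\in B_{1/2}$ was arbitrary, Campanato's characterization of $C^{2,\alpha}$ upgrades the resulting pointwise quadratic decay to the desired global bound $\|u\|_{C^{2,\alpha}(B_{1/2})}\le C$.

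The main technical obstacle is that the H\"older modulus of $\Phi$ in $x$ is only \emph{linear} in $\|N\|$, so the perturbation term $\Phi(D^2 u,x)-\Phi_0(D^2 u)$ is coupled to $D^2 u$ itself; this threatens to blow up during the iteration. I would handle this in two ways. First, the uniform ellipticity of $\Phi_0$ together with $f\in L^\infty$ and $u\in L^\infty$ yields, via the Krylov--Safonov Harnack inequality and Caffarelli's $W^{2,p}$ estimate, a local $L^p$ bound on $D^2 u$ that suffices to control the perturbation at each scale. Second, under the rescaling $w(y)=r_k^{-(2+\alpha)}(u-P_k)(x_0+r_k y)$ one computes that $\|D^2 w\|$ picks up only the harmless factor $r_k^{-\alpha}$, while the perturbation comes with $r_k^{\gamma}$ from the H\"older-in-$x$ assumption, so the net perturbation scales like $r_k^{\gamma-\alpha}$ which is a positive power of $r_k$ by our choice of $\alpha$. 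This closes the induction and completes the proof.
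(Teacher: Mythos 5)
Your sketch reconstructs Caffarelli's dyadic iteration from scratch (freeze coefficients, compare with Evans--Krylov for the frozen operator, iterate on nested balls and pass to the limit via Campanato). This is the right circle of ideas, but the paper takes a much shorter route: it first replaces $u$ by $v(x)=u(x)-\tfrac{t_0}{2}|x|^2$ with $t_0$ chosen so that $G(0,0):=\Phi(t_0 I_{2n},0)-f(0)=0$ (with $|t_0|\le\lambda^{-1}|\Phi(0,0)-f(0)|$ by ellipticity), then simply verifies Hypotheses~1 and~2 of Caffarelli's Theorem~8.1 in \cite{CC} (recalled here as Theorem~\ref{caffa}) and applies it as a black box. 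Hypothesis~1 is Perron plus the constant-coefficient Evans--Krylov Theorem~\ref{EK local}; Hypothesis~2 is immediate from $\tilde\beta(x)\le K|x|^{\gamma}$ and $f\in C^{\alpha_0}$. You instead unfold the iteration that Theorem~8.1 already packages, so your route is strictly longer for the same content.

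There is also a genuine gap at exactly the point you flag as the main obstacle. Your claim that under the rescaling the ``net perturbation scales like $r_k^{\gamma-\alpha}$'' does not follow from the scaling you wrote: the inductive hypothesis only controls $\|w\|_{L^{\infty}(B_1)}$, not $\|D^2w\|_{L^\infty}$, so the bound $Kr_k^{\gamma}(\|D^2u\|+1)$ still carries the uncontrolled factor $\|D^2w\|$ after you write $D^2u=D^2P_k+r_k^{\alpha}D^2w$. Your first proposed remedy (interior $W^{2,p}$ bounds on $D^2u$ via Krylov--Safonov and Caffarelli's $W^{2,p}$ theory) can be made to work but is a detour. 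The point you are missing is that the correct object to iterate is not the perturbation $\Phi_0(D^2u)-\Phi(D^2u,x)$ itself but the \emph{normalized} oscillation $\tilde\beta(x)=\sup_{N}\frac{|G(N,x)-G(N,0)|}{\|N\|+1}$: the division by $\|N\|+1$ absorbs the $\|D^2u\|$ dependence, so that $\tilde\beta$ is a fixed function of $x$ with $\tilde\beta(x)\le K|x|^{\gamma}$, and its $L^{2n}$ averages decay geometrically at small scales with no a priori control on $D^2u$ at all. This is precisely how Hypothesis~2 of Theorem~\ref{caffa} is formulated and why the paper's verification is a one-line computation. If you want to run the iteration yourself, you should likewise rescale the operator to $\Psi_k(M,y):=r_k^{-\alpha}\bigl(\Phi(D^2P_k+r_k^{\alpha}M,x_0+r_ky)-\Phi(D^2P_k,x_0)\bigr)$ and estimate its own $\tilde\beta_k$, which gives $\tilde\beta_k(y)\le Cr_k^{\gamma-\alpha}$ using only $\|D^2P_k\|\le C$; no $W^{2,p}$ input is needed.
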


Indeed this theorem is a direct consequence of a theorem of Caffarelli \cite[Theorem 3]{Ca}. This result is also used in the second-named author's paper \cite[Corollary 2.3]{Wa}, in the case when $\Phi$ does not depend on $x$. Furthermore, Theorem \ref{EKC local} remains true if $u\in C^0(B_1)$ is just a viscosity solution, and the proof is the same.

For the reader's convenience,
we recall the statement of Caffarelli's theorem, following the exposition in \cite[Theorem 8.1]{CC}.
Suppose that we have a viscosity solution of the equation
\begin{equation}\label{eqn}
G(D^2u(x),x)=f(x), \ \ \textrm{for } x\in B_1,
\end{equation}
where $u\in C^0(B_1),$  $G:\Sym(2n)\times B_1\to \mathbb{R}$ is continuous in $x$, $f$ is continuous, $G(0,0)=f(0)=0$, and $G$ satisfies the uniform ellipticity condition
\begin{equation}\label{ell}
\lambda \norm{P} \leq G (N +P ,  x) - G(N,  x) \leq \Lambda \norm{P}, \ \ \textrm{for } x\in B_1,
\end{equation}
for some positive constants $\lambda, \Lambda$, and for all $N,P\in\Sym(2n)$ with  $P\geq 0$.
Let $$\ti{\beta}(x)=\sup_{N\in\Sym(2n)}\frac{|G(N, x)  - G(N,  0)|}{\|N\| +1}.$$
Furthermore assume that the following hypotheses hold:\\

\noindent
{\bf Hypothesis 1. }There are constants $0<\ov{\alpha}<1$ and $C_e>0$ such that for any symmetric matrix $N$ with $G(N,0)=0$ and any $w_0\in C^0(\de B_1)$, there exists a function
$w\in C^2(B_1)\cap C^0(\ov{B_1})\cap C^{2,\ov{\alpha}}(B_{1/2})$ which solves
\begin{equation}\label{dir}
G(D^2w(x)+N,0)=0
\end{equation}
in $B_1$ and $w=w_0$ on $\de B_1$, and
\begin{equation}\label{todo}
\|w\|_{C^{2,\ov{\alpha}}(B_{1/2})}\leq C_e\|w\|_{L^\infty(B_1)}.
\end{equation}

\noindent
{\bf Hypothesis 2. }There exist $0<\alpha<\ov{\alpha}$,  $0<r_0\leq 1, C_1>0$ and $C_2>0$, such that
$$\left(\frac{1}{r^{2n}}\int_{B_r} \ti{\beta}^{2n}\right)^{1/2n}\leq C_1 r^\alpha r_0^{-\alpha},$$
$$\left(\frac{1}{r^{2n}}\int_{B_r} |f|^{2n}\right)^{1/2n}\leq C_2 r^\alpha r_0^{-\alpha},$$
for all $0<r\leq r_0$.\\

Then Theorem 8.1 of \cite{CC} asserts:

\begin{thm}\label{caffa}
Let $u$ solve (\ref{eqn}) with the assumptions described above, including   Hypothesis 1 and Hypothesis 2.
Then $u$ is $C^{2,\alpha}$ at the origin.  More precisely, there exists a constant $C>1$ which depends only on $n$, $\lambda$, $\Lambda$, $C_e$, $\ov{\alpha}$, $\alpha$, $C_1$, $C_2$ and
a polynomial function $P$ of degree $2$ such that
$$\|u-P\|_{L^\infty(B_r)}\leq C_3 r^{2+\alpha} r_0^{-(2+\alpha)},\quad \textrm{ for all }r\leq C^{-1}r_0,$$
$$r_0|DP(0)|+r_0^2\|D^2P\|\leq C_3,$$
$$C_3\le C(\|u\|_{L^\infty(B_{r_0})}+r_0^2(C_2+1)).$$
\end{thm}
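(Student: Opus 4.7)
The plan is to prove the theorem by a dyadic iteration in the spirit of Caffarelli's perturbation theory, constructing for each $k \geq 0$ a quadratic polynomial $P_k$ with uniformly bounded coefficients such that, for a sufficiently small $\rho\in(0,1)$ depending only on $n,\lambda,\Lambda,C_e,\bar\alpha,\alpha$,
\begin{equation*}
\|u - P_k\|_{L^{\infty}(B_{r_k})} \leq C_3\, r_k^{2+\alpha}\, r_0^{-(2+\alpha)},\qquad r_k := \rho^k r_0.
\end{equation*}
The polynomial $P$ claimed in the statement will be recovered as the limit of the $P_k$. We may normalize so that $u(0)=0$, $Du(0)=0$, $P_0=0$, and by a simple rescaling we may assume $r_0=1$ at the end.

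First I would set up the one-step improvement. Suppose inductively that $P_k$ is known. Set $\bar u_k := u - P_k$ and introduce the modified operator $\bar G_k(M,x) := G(M + D^2 P_k,x) - G(D^2 P_k,0)$, which shares the ellipticity and concavity of $G$ and satisfies $\bar G_k(0,0)=0$. Then $\bar G_k(D^2 \bar u_k,x) = f(x) - G(D^2 P_k,0)$. Apply Hypothesis 1 on the rescaled ball to solve the frozen Dirichlet problem $\bar G_k(D^2 w,0)=0$ in $B_{r_k}$ with $w=\bar u_k$ on $\partial B_{r_k}$; this produces a solution $w$ with a $C^{2,\bar\alpha}$ estimate. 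Let $Q_k$ denote its quadratic Taylor polynomial at $0$, so that
\begin{equation*}
\|w - Q_k\|_{L^{\infty}(B_{\rho r_k})} \leq C\,\rho^{2+\bar\alpha} r_k^{2+\bar\alpha}\,\|w\|_{L^{\infty}(B_{r_k})}.
\end{equation*}

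Next I would compare $\bar u_k$ to $w$ via the Pucci extremal inequalities: setting $v := \bar u_k - w$, uniform ellipticity of $\bar G_k$ gives
\begin{equation*}
M^{-}_{\lambda,\Lambda}(D^2 v) \leq |f - G(D^2 P_k,0)| + \tilde\beta(x)\bigl(\|D^2 w\| + \|D^2 P_k\| + 1\bigr)
\end{equation*}
in the viscosity sense, and symmetrically for $M^{+}$. The ABP maximum principle then yields
\begin{equation*}
\|v\|_{L^{\infty}(B_{r_k})} \leq C r_k\Bigl(\|f-G(D^2 P_k,0)\|_{L^{2n}(B_{r_k})} + (\|D^2 P_k\|+1)\|\tilde\beta\|_{L^{2n}(B_{r_k})}\Bigr),
\end{equation*}
and Hypothesis 2 bounds each $L^{2n}$ average by $C\,r_k^{\alpha}r_0^{-\alpha}$ times the corresponding $L^\infty$ size. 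Setting $P_{k+1} := P_k + Q_k$ and combining the two approximations on $B_{r_{k+1}}=B_{\rho r_k}$ via the triangle inequality produces
\begin{equation*}
\|u - P_{k+1}\|_{L^{\infty}(B_{r_{k+1}})} \leq C\bigl(\rho^{2+\bar\alpha} + \rho^{-2}\,r_k^{\alpha}r_0^{-\alpha}\bigr)\cdot r_k^{2+\alpha} r_0^{-\alpha}.
\end{equation*}

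The main obstacle is closing this induction, and it is exactly here that Hypothesis 2 plays its role. We need the right-hand side to be bounded by $C_3 \rho^{2+\alpha} r_k^{2+\alpha} r_0^{-(2+\alpha)}$. First choose $\rho$ small enough (depending on $\bar\alpha - \alpha > 0$) so that $C\rho^{2+\bar\alpha} \leq \tfrac12 \rho^{2+\alpha}$; then the residual perturbation error from $f$ and $\tilde\beta$ contributes only $O(r_k^\alpha r_0^{-\alpha})$, which may be absorbed into the same bound by choosing $C_3$ sufficiently large in terms of $C_1,C_2$. Once the induction is closed, one checks that $\|P_{k+1}-P_k\|_{L^\infty(B_{r_{k+1}})}$ is controlled by a geometric series, which after dividing by $r_{k+1}^{2}$ yields a uniform bound on $\|D^2 P_k\|$ and $r_0\|DP_k(0)\|$, so the coefficients form Cauchy sequences. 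The limit polynomial $P$ of degree two then satisfies the three estimates in the conclusion, completing the argument.
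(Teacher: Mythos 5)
The paper offers no proof of this statement: it is quoted verbatim as Theorem~8.1 of Caffarelli--Cabr\'e \cite{CC}, and the paper simply invokes it. Your sketch reproduces the standard Caffarelli perturbation iteration, which is the proof given in that reference, so in substance you and the cited source take the same route; what follows is a review of your sketch on its own terms.

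The outline is right, but two technical points are glossed over in a way that would need repair in a full writeup. First, to apply Hypothesis~1 you must be solving an equation of the form $G(D^2 w + N,0)=0$ with $G(N,0)=0$. Your modified operator $\bar G_k(M,x)=G(M+D^2P_k,x)-G(D^2P_k,0)$ only coincides with such a shift of $G$ when $G(D^2P_k,0)=0$; otherwise the constant $-G(D^2P_k,0)$ spoils the match. This invariant is in fact maintained by the iteration --- $Q_k$ is the second-order Taylor polynomial of the frozen solution $w$, so $D^2 Q_k=D^2 w(0)$ and hence $G(D^2 P_{k+1},0)=G(D^2 w(0)+D^2P_k,0)=0$ --- but you should say so, since without it the appeal to Hypothesis~1 is unjustified. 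Second, your ABP comparison of $v=\bar u_k-w$ invokes $\|D^2 w\|$ on all of $B_{r_k}$, yet Hypothesis~1 only furnishes an \emph{interior} $C^{2,\bar\alpha}$ estimate (on $B_{r_k/2}$, say), and $D^2w$ need not be controlled up to $\partial B_{r_k}$. The cited argument handles this by splitting $B_{r_k}$ into an interior ball, where the interior estimate bounds $D^2 w$, and a boundary annulus, where one instead uses boundary continuity (or a barrier) to bound $|u-P_k-w|$. Incorporating these two refinements turns your sketch into a correct proof; neither represents a wrong idea, only omitted bookkeeping.
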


We now use Theorems \ref{EK local} and \ref{caffa} to prove Theorem \ref{EKC local}.

\begin{proof}[Proof of Theorem \ref{EKC local}] We follow the exposition in \cite[Corollary 2.3]{Wa}.
Define $$G(N,x)=\Phi(N+t_0 I_{2n},x)-f(0),$$ where $t_0\in\mathbb{R}$ is chosen so that
$\Phi(t_0 I_{2n},0)=f(0)$. Such a $t_0$ exists because of the ellipticity of $\Phi(N,0),$ which also implies that
\begin{equation}\label{elli}
|t_0|\leq\lambda^{-1}|\Phi(0,0)-f(0)|.
\end{equation}
We also define $g(x)=f(x)-f(0)$ and $v(x)=u(x)-\frac{t_0}{2}|x|^2$. Then we have that
$$G(D^2v(x),x)=g(x),$$
and $G(0,0)=g(0)=0$. Thanks to \eqref{elli}, it is enough to bound the $C^{2,\alpha}$ norm of $v$.

It is clear that $G$ satisfies the ellipticity condition \eqref{ell} and is continuous in $x$. To verify Hypothesis 1, it is enough to observe that since $G$ is uniformly elliptic, the Dirichlet problem for a viscosity solution of the equation \eqref{dir} can
be solved using Perron's method, and the Evans-Krylov Theorem \ref{EK local} shows that $u$ is $C^{2,\ov{\alpha}}$ with the estimate \eqref{todo}, with $C_e$ and $\ov{\alpha}$ depending only on $n,\lambda, \Lambda$.

Furthermore, we have
$$\ti{\beta}(x)=\sup_{N\in\Sym(2n)}\frac{|\Phi(N+t_0 I_n, x)  - \Phi(N+t_0 I_n,  0)|}{\|N\| +1}\leq C|x|^\gamma,$$
where $C$ depends on $K$, $n$, $\lambda$, $\Phi(0,0)$ and $f(0)$. It follows that Hypothesis 2 is satisfied for any $r_0\leq 1$ and any $\alpha<\min(\gamma,\alpha_0,\ov{\alpha})$,
with $C_1,C_2$ depending only on  $K$, $n$, $\lambda$, $\Phi(0,0)$, $f(0)$ and $\|f\|_{C^{\alpha_0}(B_1)}$.

We can thus apply Theorem \ref{caffa} and conclude that $v$ (and hence $u$) is $C^{2,\alpha}$ at $0$ with the bounds given there. By translation of the coordinates, we see that $u$ is
$C^{2,\alpha}$ at every $x\in B_{1/2}$ with these bounds. But a standard covering argument (see e.g. \cite[Remark 3, p.74]{CC}) implies that $u\in C^{2,\alpha}(B_{1/2})$ with
$\norm{u }_{C^{2,\alpha} (B_{1/2})} \leq C$ for a constant $C$ depending only on the stated quantities.
\end{proof}

\section{Proof of Theorem \ref{Apply EK}} \label{sectionproof}
In this section, we present the proof of Theorem \ref{Apply EK}.

\begin{proof}[Proof of Theorem \ref{Apply EK}]
Under the assumptions of Theorem \ref{Apply EK}, we shall construct a map $\Phi : \Sym(2n)  \times B_1 \rightarrow \R$ in $\mathscr{F}_{2n} (K^{-1}\lambda, 2Kn\Lambda, 5\Lambda Kn, \beta)$ such that
\[
F(S(x) + T(D^2u(x), x),x) = \Phi(D^2u(x), x)  \quad \textrm{for all } x \in B_1,
\]
and then apply Theorem \ref{EKC local}.

First of all, recall that since $\langle A, B \rangle = \textrm{tr}(AB)$ defines an inner product on the space of symmetric matrices, every linear map $\Sym(2n) \rightarrow \mathbb{R}$ can be written as $X \mapsto \textrm{tr}(AX)$ for some symmetric matrix $A \in \Sym(2n)$.  In this way, we identify the derivative $DG$ of any map $G:\Sym(2n) \rightarrow \mathbb{R}$ at a given point with an element of $\Sym(2n)$.

Let also $\mathcal{H}$ be subset of $\mathrm{Sym}(2n)$ given by symmetric matrices with all eigenvalues in the interval $[\lambda,\Lambda]$. By assumption {\bf H1}.(1), for any $x\in B_1$ and $A\in \E$, the derivative $DF = (F_{ij})$ of $F(\cdot, x)$ at $A$ lies in $\mathcal{H}.$

Following \cite{Wa}, we define $\bar{F}: \Sym(2n)\times B_1 \rightarrow \mathbb{R}$ as follows
\[\begin{split}
\bar{F} (N,x) := \inf\{ L(N)  \ | \ L: \Sym(2n) \rightarrow \mathbb{R} &\text{ affine linear},\, DL\in\mathcal{H},\\
& L(A) \geq F (A,x) , \forall A \in \E  \}.
\end{split}\]
Note that $\bar{F}(N,x)>-\infty$ for all $N\in\Sym(2n), x\in B_1$.
We have the following lemma (cf. \cite[Section 3]{Wa}).

\begin{lemma}\label{use}
$\bar{F}$ satisfies the following properties.
\begin{enumerate}
\item[(i)] $\bar{F}(\cdot, x)$ is concave on $\Sym(2n)$ for all $x\in B_1$, and $\bar{F}= F$ on $\E\times B_1$.
\item[(ii)] $\bar{F}(\cdot,x)$ is Lipschitz on $\Sym(2n)$ with Lipschitz constant $2n \Lambda$, for all $x\in B_1$.
\item[(iii)] For all $N, P \in \Sym(2n)$ with $P \ge 0$ and for all $x\in B_1$, we have
$$ \lambda \| P \| \le \bar{F}(N+P,x) - \bar{F}(N,x) \le 2n \Lambda \| P \|.$$
\item[(iv)] For all $N\in\Sym(2n)$ and all $x,y\in B_1$ we have
$$|\bar{F}(N,x)-\bar{F}(N,y)|\leq K|x-y|^\beta.$$
\end{enumerate}

\end{lemma}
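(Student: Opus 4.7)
The plan is to verify items (i)--(iv) in sequence, at each step exploiting the fact that $\bar F(\cdot,x)$ is defined as an infimum of a nonempty family of affine linear functions with controlled gradients. First, let me check that the defining family is nonempty and that $\bar F(N,x)>-\infty$. By \textbf{H1}.(2), $F(\cdot,x)$ is concave on the convex set $\E$ and of class $C^1$, so at any $A_0\in\E$ the tangent affine map $L_{A_0}(N):=F(A_0,x)+\langle DF(A_0,x),N-A_0\rangle$ satisfies $L_{A_0}\ge F(\cdot,x)$ on $\E$, and by \textbf{H1}.(1) its gradient $DF(A_0,x)$ lies in $\mathcal H$. Hence the family is nonempty. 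Conversely, every admissible $L$ has $|L(N)|$ bounded below by $F(A_0,x)-2n\Lambda\|N-A_0\|$ (since $DL\in\mathcal H$, see below), so the infimum is finite.

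For (i), an infimum of affine functions is automatically concave in $N$. To see $\bar F=F$ on $\E\times B_1$: the tangent construction $L_A$ above shows $\bar F(A,x)\le F(A,x)$, while any admissible $L$ satisfies $L(A)\ge F(A,x)$, giving the reverse inequality. For (ii), note that any $L$ with $DL\in\mathcal H$ is $2n\Lambda$-Lipschitz in the operator norm: indeed, since $DL\ge 0$ with eigenvalues in $[\lambda,\Lambda]$, the trace norm satisfies $\|DL\|_1=\mathrm{tr}(DL)\le 2n\Lambda$, so $|L(N)-L(M)|=|\mathrm{tr}(DL\,(N-M))|\le\|DL\|_1\|N-M\|\le 2n\Lambda\|N-M\|$. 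Taking an $\varepsilon$-optimal competitor at $M$ and comparing at $N$ transfers this Lipschitz bound to $\bar F(\cdot,x)$.

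For (iii), given $\varepsilon>0$ and $P\ge 0$, pick an admissible $L$ at $(N,x)$ with $L(N)\le\bar F(N,x)+\varepsilon$. Then $\bar F(N+P,x)\le L(N+P)=L(N)+\mathrm{tr}(DL\cdot P)$, and since $DL\le\Lambda I$ and $P\ge 0$, $\mathrm{tr}(DL\cdot P)\le\Lambda\,\mathrm{tr}(P)\le 2n\Lambda\|P\|$, which gives the upper bound. For the lower bound pick $L$ nearly optimal at $(N+P,x)$; then
\[
\bar F(N,x)\le L(N)=L(N+P)-\mathrm{tr}(DL\cdot P)\le \bar F(N+P,x)+\varepsilon-\lambda\,\mathrm{tr}(P),
\]
and the key numerical fact $\mathrm{tr}(P)\ge\|P\|$ for $P\ge 0$ (the maximal eigenvalue is at most the sum) yields $\bar F(N+P,x)-\bar F(N,x)\ge\lambda\|P\|-\varepsilon$. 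Let $\varepsilon\to 0$.

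For (iv), the clever point --- and probably the one thing that needs noticing --- is the following lifting trick. Given $\varepsilon>0$, choose $L$ admissible for $\bar F(\cdot,y)$ with $L(N)\le\bar F(N,y)+\varepsilon$. Define $\tilde L(M):=L(M)+K|x-y|^\beta$; by \textbf{H1}.(3), for every $A\in\E$,
\[
\tilde L(A)=L(A)+K|x-y|^\beta\ge F(A,y)+K|x-y|^\beta\ge F(A,x),
\]
so $\tilde L$ is admissible for $\bar F(\cdot,x)$. Hence $\bar F(N,x)\le\tilde L(N)\le\bar F(N,y)+\varepsilon+K|x-y|^\beta$. Symmetry in $(x,y)$ and $\varepsilon\to 0$ complete the proof. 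The main obstacle is really just organizing the $\varepsilon$-competitor arguments consistently; once one sees the lifting trick in (iv), everything else is bookkeeping with the definition of $\mathcal H$.
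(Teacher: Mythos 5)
Your proof is correct and follows essentially the same route as the paper's: concavity comes from $\bar F$ being an infimum of affine maps, the tangent-plane construction gives $\bar F = F$ on $\E$, the Lipschitz and ellipticity bounds in (ii)--(iii) come from $|\trace(A X)| \le \trace(A)\,\|X\| \le 2n\Lambda\|X\|$ for $A \in \mathcal H$ together with $\trace(P) \ge \|P\|$ for $P\ge 0$, and the shift $\tilde L = L + K|x-y|^\beta$ is exactly the paper's device for (iv). Your use of $\varepsilon$-optimal competitors rather than exact minimizers is slightly more careful (the paper tacitly takes the infimum to be attained, which requires a small compactness argument over $\mathcal H$), but this is a cosmetic refinement rather than a different approach.
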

\begin{proof}
Part (i) is immediate.  Indeed, $\bar{F}(\cdot,x)$ is concave on $\Sym(2n)$ since it is an infimum of affine linear functions.  From {\bf H1}, $F(\cdot,x)$ is concave on $\E$
and its derivative $DF$ at a point $A\in \E$ lies in $\mathcal{H}$, it follows that $\bar{F}=F$ on $\E\times B_1$.


Every affine linear map $L : \Sym(2n) \rightarrow \mathbb{R}$ can be written as $$L(X) = \textrm{tr}(AX) +c$$ for  $A=DL \in \Sym(2n)$ and some constant $c \in \mathbb{R}$.  Now for any $N, X \in \Sym(2n)$ we will show that
$$|\bar{F}(N+X,x) - \bar{F}(N,x)| \le 2n \Lambda \| X \|,$$
which will establish (ii).  From the definition of $\bar{F}$ it follows easily that there exist affine linear maps $L_1, L_2 : \Sym(2n) \rightarrow \mathbb{R}$ with $A_1=DL_1,A_2=DL_2\in\mathcal{H}$, $L_1, L_2 \ge F(\cdot,x)$ on $\E$ and
$$\bar{F}(N+X,x) = L_1(N+X), \quad \bar{F}(N,x) = L_2(N).$$
Hence
\[
\begin{split}
\bar{F}(N+X,x) -\bar{F}(N,x) = {} & L_1(N+X) - L_2(N) \\
\le {} & L_2(N+X) - L_2(N) \\
= {} & \textrm{tr}(A_2 X)
\le  2n \Lambda \| X \|,
\end{split}
\]
where the first inequality uses the fact that $L_1(N+X)$ is an infimum of $L(N+X)$ over all affine linear functions $L$ with $L|_{\E} \ge F(\cdot,x)|_{\E}$.  Similarly,
\[
\begin{split}
\bar{F}(N+X,x) - \bar{F}(N,x) = {} & L_1(N+X) - L_2(N) \\
\ge {} & L_1(N+X) - L_1(N) \\
= {} & \textrm{tr}(A_1X)  \ge - 2n\Lambda \| X \|.
\end{split}
\]

For (iii), all that  remains is to prove the lower bound of $\bar{F}(N+P,x) - \bar{F}(N,x)$ for $P \ge 0$.  Write $\mu_i\ge 0$ for the eigenvalues of $P$.
Then if $\bar{F}(N+P,x) = L_1(N+P)$ and $\bar{F}(N,x) = L_2(N)$, we have, using the notation and argument as above,
\[
\begin{split}
\bar{F}(N+P,x) - \bar{F}(N,x)
\ge {} & \trace(A_1P)
\ge  \lambda (\mu_1 + \cdots + \mu_{2n}) \ge \lambda \| P \|.
\end{split}
\]
For (iv), given $N\in\Sym(2n)$ there exist affine linear maps $L_1, L_2 : \Sym(2n) \rightarrow \mathbb{R}$ with $DL_1,DL_2\in\mathcal{H}$, $L_1 \ge F(\cdot,x)$ and $L_2\geq F(\cdot,y)$ on $\E$ and
$$\bar{F}(N,x) = L_1(N), \quad \bar{F}(N,y) = L_2(N).$$
For any given $A\in \E$ we have
$$L_2(A)\geq F(A,y)\geq F(A,x)-K|x-y|^\beta,$$
using hypothesis {\bf H1}.(3). Therefore $\ti{L}_2:=L_2+K|x-y|^\beta$ is another affine linear map with $D\ti{L}_2\in\mathcal{H}$ and
$\ti{L}_2\geq F(\cdot, x)$ on $\E$. By definition of $\bar{F}$, we have
$$L_1(N)\leq \ti{L}_2(N)=L_2(N)+K|x-y|^\beta,$$
i.e. $L_1(N)-L_2(N)\leq K|x-y|^\beta$. Similarly we obtain $L_2(N)-L_1(N)\leq K|x-y|^\beta$, and so
$$|\bar{F}(N,x)-\bar{F}(N,y)|=|L_1(N)-L_2(N)|\leq K|x-y|^\beta.$$
This completes the proof.
\end{proof}

Now,  define $\Phi : \Sym (2n)  \times B_1 \rightarrow \mathbb{R}$  by
\[
\Phi (N, x) := \bar{F} \smp{ S(x) + T(N,x),x},
\]
noting that $\Phi(0,0)=\bar{F}(S(0),0)$ is bounded by {\bf H1} and {\bf H3}.

\begin{lemma}
\label{Claim2 aEK}
$\Phi$ lies in $\mathscr{F}_{2n} (K^{-1}\lambda, 2Kn\Lambda, 5\Lambda Kn, \beta)$.
\end{lemma}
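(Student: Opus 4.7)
The plan is to verify the three defining axioms of $\mathscr{F}_{2n}(K^{-1}\lambda,\,2Kn\Lambda,\,5\Lambda Kn,\,\beta)$ for $\Phi$ by transferring them from $\bar F$ through the affine change of variables $N\mapsto S(x)+T(N,x)$. All the ingredients are already in place: Lemma \ref{use} supplies fiberwise concavity, a global Lipschitz constant, uniform ellipticity, and $\beta$-H\"older dependence in $x$ for $\bar F$, while \textbf{H2} and \textbf{H3} control how $T$ and $S$ behave. The role of $\bar F$ is precisely to eliminate the need to know anything about the ``shape'' of $\mathcal{E}$; we only need it through Lemma \ref{use}.

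First I would check concavity. By \textbf{H2}.(2) the map $N\mapsto S(x)+T(N,x)$ is affine in $N$ for each fixed $x$, so $\Phi(\cdot,x)$ is the composition of the concave function $\bar F(\cdot,x)$ from Lemma \ref{use}(i) with an affine map, and is therefore concave.

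Next I would check uniform ellipticity. Given $P\geq 0$, \textbf{H2}.(3) gives $T(P,x)\geq 0$ together with the two-sided control $K^{-1}\|P\|\leq\|T(P,x)\|\leq K\|P\|$. Setting $M:=S(x)+T(N,x)$ and writing $\Phi(N+P,x)-\Phi(N,x)=\bar F(M+T(P,x),x)-\bar F(M,x)$, Lemma \ref{use}(iii) bounds this difference from below by $\lambda\|T(P,x)\|$ and from above by $2n\Lambda\|T(P,x)\|$. Combining with \textbf{H2}.(3) yields exactly the ellipticity constants $K^{-1}\lambda$ and $2nK\Lambda$.

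For the H\"older bound in $x$, I would telescope the difference as
\begin{equation*}
\Phi(N,x)-\Phi(N,y)=\bigl[\bar F(M(x),x)-\bar F(M(x),y)\bigr]+\bigl[\bar F(M(x),y)-\bar F(M(y),y)\bigr],
\end{equation*}
where $M(x):=S(x)+T(N,x)$. The first bracket is controlled by $K|x-y|^\beta$ by Lemma \ref{use}(iv). For the second, the Lipschitz bound of Lemma \ref{use}(ii) gives a bound by $2n\Lambda\|M(x)-M(y)\|$; then \textbf{H3} and \textbf{H2}.(1) yield $\|M(x)-M(y)\|\leq K|x-y|^\beta+K(\|N\|+1)|x-y|^\beta=K(\|N\|+2)|x-y|^\beta$. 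Dividing by $\|N\|+1$ and using $\|N\|+2\leq 2(\|N\|+1)$ absorbs all the pieces into the single constant $5\Lambda Kn$. No genuine obstacle is expected: the argument is pure bookkeeping around Lemma \ref{use}, and the only mild subtlety is tracking constants carefully enough that the final H\"older seminorm really fits inside $5\Lambda Kn$ rather than some larger multiple.
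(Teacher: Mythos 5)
Your proposal is correct and follows essentially the same route as the paper: concavity via the affine map $N\mapsto S(x)+T(N,x)$ composed with the concave $\bar F(\cdot,x)$, ellipticity via \textbf{H2}.(3) feeding into Lemma \ref{use}(iii), and the H\"older bound via the identical two-term telescoping with Lemma \ref{use}(ii),(iv), \textbf{H2}.(1) and \textbf{H3}. The only cosmetic difference is in the final arithmetic: you bound $\|M(x)-M(y)\|\le K(\|N\|+2)|x-y|^\beta$ and then use $\|N\|+2\le 2(\|N\|+1)$, whereas the paper divides the $S$- and $T$-contributions by $\|N\|+1$ separately (using $\|N\|+1\ge 1$ for the $S$-term), but both land on $K(1+4n\Lambda)\le 5Kn\Lambda$.
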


Given the lemma, we can complete the proof of Theorem \ref{Apply EK}.  Indeed, by the assumption \eqref{C2 priori}, we know
\[
S(x)+  T(D^2 u (x) , x)    \in \E , \quad \forall x \in B_1.
\]
It follows then from Lemma \ref{use} (i) that for $x\in B_1$
$$\Phi(D^2u(x), x) =F(S(x)+T(D^2u(x),x),x)= f(x).$$
and applying Theorem \ref{EKC local} completes the proof.
\end{proof}

\begin{proof}[Proof of Lemma \ref{Claim2 aEK}]

First, we check fiberwise concavity of $\Phi$. Fix $x \in B_1$.  Then for any $A,B \in \Sym(2n)$ we have
\[
\begin{split}
\Phi\smp{ \frac{A +B }{2} , x} & = \bar{F}  \smp{ S (x) + T \left( \frac{A+B}{2}, x \right) ,x  } \\
&  = \bar{F} \smp{ \frac{S(x)  + T(A, x)   }{2} + \frac{S( x) + T(B,x)  }{2}  ,x } \\
& \geq \frac{1}{2}\bar{F} \smp{ S(x)  + T(A,x)   ,x }  + \frac{1}{2}\bar{F} \smp{ S(x) + T(B, x)   ,x} \\
& = \frac{1}{2} \Phi (A, x) +  \frac{1}{2} \Phi (B, x).
\end{split}
\]
Here, we have used linearity of $T$ and concavity of $\bar{F}$.

For uniform ellipticity of $\Phi$, we compute for $x \in B_1$ and any $N,P \in \Sym(2n)$ with $P\ge 0$,
\[
\begin{split}
\lefteqn{\Phi(N+P,x) - \Phi(N,x)  }\\  = {} & \bar{F}(S(x)+T(N+P,x),x) - \bar{F}(S(x)+T(N,x),x) \\
= {} & \bar{F}(S(x)+ T(N,x) + T(P,x),x)  -  \bar{F}(S(x)+T(N,x),x).
\end{split}
\]
From the ellipticity of $\bar{F}$ we have
$$\lambda \| T(P,x) \| \le \Phi(N+P,x) - \Phi(N,x) \le 2n \Lambda \| T(P,x) \|$$
and hence from {\bf H2}.(3),
$$K^{-1} \lambda \| P \| \le   \Phi(N+P,x) - \Phi(N,x)  \le 2Kn \Lambda \| P\|,$$
giving the required ellipticity of $\Phi$.

Finally, we need to check that $\Phi$ has a H\"older bound in $x$. Using Lemma \ref{use} (ii) and (iv) we have
\[
\begin{split}
\lefteqn{
\frac{|\Phi(N,x) - \Phi(N,y)|}{\| N\|+1}  } \\= {} & \frac{1}{\| N \| +1} \left| \bar{F} (S(x)+T(N,x),x) - \bar{F}(S(y)+T(N,y),y) \right| \\
\le {} &  \frac{1}{\| N \| +1}\bigg( \left| \bar{F} (S(x)+T(N,x),x) - \bar{F}(S(x)+T(N,x),y) \right|\\
&+\left| \bar{F} (S(x)+T(N,x),y) - \bar{F}(S(y)+T(N,y),y) \right|\bigg)\\
\le {} & K|x-y|^\beta+\frac{2n\Lambda}{ \| N \| +1} \| S(x)+T(N,x)  -S(y)  - T(N,y) \| \\
\le{} & K|x-y|^\beta+\frac{2n\Lambda \| S(x) -S(y) \| }{ \| N \| +1}  + \frac{2n \Lambda \| T(N,x) - T(N,y) \|}{\| N \| +1} \\
\le {} & K|x-y|^\beta+2K n\Lambda |x-y|^{\beta} + 2K n \Lambda |x-y|^\beta \le 5Kn\Lambda |x-y|^{\beta},
\end{split}
\]
as required.
\end{proof}

\section{Donaldson's equation} \label{sectiondon}

In this final section, we give an application of Theorem \ref{theoremCG} to an equation of Donaldson.

We begin by recalling some basic definitions.  A symplectic form $\omega$ on a manifold $M$ \emph{tames} an almost complex structure $J$ if at each point of $M$, $\omega(X, JX) >0$ for all nonzero vectors $X$. We can define a Riemannian metric by $g_{\omega}(X,Y)=\frac{1}{2}(\omega(X,JY)+\omega(Y,JX))$.  If, in addition, $\omega(JX, JY) = \omega(X,Y)$ for all $X, Y$ then we say that $\omega$ is \emph{compatible} with $J$.

We now describe a conjecture of Donaldson.  Let $M$ be a compact $2n$ dimensional (real) manifold with an
 almost complex structure $J$ and a symplectic form $\Omega$ taming $J$.  Let $\sigma$ be a smooth (positive) volume form on $M$.   Suppose $\tilde{\omega}$ is a symplectic form on $M$ compatible with $J$, satisfying  $[\tilde{\omega}]= [\Omega] \in H^2(M, \mathbb{R})$ and solving the Calabi-Yau equation
 \begin{equation} \label{cy}
 \tilde{\omega}^{n} = \sigma.
\end{equation}
Donaldson conjectured that, for $n=2$, there are $C^{\infty}$ a priori bounds on $\tilde{\omega}$ depending only on $\Omega$, $J$ and $\sigma$.   If this result were true, it would have important applications to symplectic topology \cite{Do2, LZ,LZ2,Ta}.  Some partial results towards this conjecture were given in \cite{We3, TWY}.  In particular, the conjecture holds, for any $n$, if the curvature of the canonical connection of $(g_{\Omega}, J)$
satisfies a certain positivity condition \cite{TWY}. The conjecture was solved in several special cases, such as the Kodaira-Thurston manifold \cite{kt} and more general $T^2$-bundles over $T^2$ \cite{BFV, FLSV} assuming $T^2$ symmetry. The interested reader can consult the survey \cite{TW25}.

The result we prove is as follows:

\begin{theorem}  \label{theoremdon}  Let $\tilde{\omega} \in [\Omega]$ solve (\ref{cy}) on $(M^{2n}, J, \Omega)$ with the notation as above.
 Fix $\alpha_0 \in (0,1)$.
Suppose that on a geodesic $g_{\Omega}$-ball $B_R$ of radius $R>0$ in $M$ we have
\begin{equation} \label{tr}
\emph{tr}_{g_{\Omega}}{\, \tilde{g}} \le C_0,
\end{equation}
where $\tilde{g}:= g_{\tilde{\omega}}$ is the metric associated to $\tilde{\omega}$.
Then there exist $\alpha \in (0,1)$ and $C>0$ depending only on $M, \Omega, J, R, \alpha_0$ and $\| \sigma \|_{C^{\alpha_0}(B_R, \, g_{\Omega})}$ such that
$$ \| \tilde{g} \|_{C^{\alpha}(B_{R/2}, \, g_{\Omega})} \le C.$$
\end{theorem}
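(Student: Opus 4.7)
The plan is to reduce Theorem \ref{theoremdon} to the almost-complex Monge-Amp\`ere case of Theorem \ref{theoremCG}. Since the desired estimate is purely local, I would work on a small geodesic ball $B_r(p) \subset B_{R/2}$ where $r$ depends only on $R$ and the geometry of $(M, J, \Omega)$, and deduce the bound on $B_{R/2}$ at the end by a standard covering argument. In such a ball I would choose almost-complex coordinates $(x^1,\dots,x^{2n})$ with $J(p)$ equal to the standard complex structure on $\R^{2n}$, and fix a smooth positive $J$-compatible $(1,1)$-form $\omega_0$ on $B_r(p)$ (for instance the $(1,1)$-part of $\Omega$, which is positive because $\Omega$ tames $J$).

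The heart of the argument is the construction of a local scalar potential $u$ on $B_r(p)$ such that
\[
\tilde\omega = \omega_0 + \chi + \tfrac{1}{2}(dJdu)^{(1,1)},
\]
where $\chi$ is a real $(1,1)$-form whose coefficients are bounded in $C^\beta$ for any $\beta\in(0,1)$, absorbing both the non-integrability correction terms and the non-$J$-invariant pieces. Following the strategy in \cite{We3, TWY}, I would first extract a $1$-form primitive for $\tilde\omega-\omega_0$ (modulo the non-closed correction coming from $d\omega_0\neq 0$), then use a suitable gauge fixing to extract a scalar potential, and finally bundle the $(2,0)+(0,2)$ part of $dJdu$ together with all lower-order remainders into $\chi$. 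After a normalization such as $\int_{B_r(p)} u = 0$, the $C^0$ bound on $\tilde\omega-\omega_0$ coming from \eqref{tr} yields $\|u\|_{L^\infty}\le K$, and standard elliptic theory promotes this to a $C^{1,\beta}$ bound on $u$, which in turn gives the uniform $C^\beta$ control of $\chi$.

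With this decomposition in hand, Donaldson's equation $\tilde\omega^n=\sigma$ becomes
\[
\left(\omega_0+\chi+\tfrac{1}{2}(dJdu)^{(1,1)}\right)^n = e^\psi\,\omega_0^n, \qquad \psi:=\log(\sigma/\omega_0^n)\in C^{\alpha_0}(B_r(p)),
\]
which is precisely of the form \eqref{eqnac} applied to \eqref{eqnma}. The trace hypothesis \eqref{tr}, combined with the uniform equivalence of $\omega_0$ and $g_\Omega$, gives $\Delta u\le K$ for the complex Laplacian associated to $\omega_0$, so both conditions in \eqref{assumeu} are in force. Applying Theorem \ref{theoremCG} (in the local form of Theorem \ref{Apply EK}) then yields $\|u\|_{C^{2,\alpha}(B_{r/2}(p))}\le C$, which translates to the claimed $C^\alpha$ bound on $\tilde g=g_{\tilde\omega}$ once the lower-order contributions of $\omega_0$ and $\chi$ are accounted for.

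The hard part will be the construction of the local potential $u$. Because $J$ is not integrable, the operator $u\mapsto(dJdu)^{(1,1)}$ does not map into closed forms, and $\omega_0$ itself fails to be closed, so the naive Poincar\'e-lemma approach does not immediately produce the potential. The resolution, as in \cite{TWY}, requires carefully keeping track of the torsion and non-integrability terms inside the error $(1,1)$-form $\chi$, and exploiting the $C^{1,\beta}$ regularity of $u$ to control $\chi$ in $C^\beta$. Once this step is in place, the remainder of the argument is essentially a direct application of Theorem \ref{theoremCG}.
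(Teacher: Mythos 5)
Your overall strategy — reduce to the almost-complex case of Theorem \ref{theoremCG} by writing $\tilde\omega = \chi + (dJdu)^{(1,1)}$ with $\chi$ bounded in $C^\beta$ — is exactly the paper's. However, you correctly identify the construction of $u$ and $\chi$ as ``the hard part'' and then leave it essentially unexplained, and the sketch you do give of that step is misleading. You say you would ``extract a $1$-form primitive for $\tilde\omega-\omega_0$ \ldots then use a suitable gauge fixing to extract a scalar potential.'' That is not how the paper proceeds, and it cannot work in that order: a general exact $2$-form on $B_R$ has no reason to be $dJdu$ plus something bounded in $C^\beta$, and no amount of gauge fixing a $1$-form primitive will produce such a scalar potential.

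The paper's construction goes the other way. First it defines $u$ directly, by solving the Dirichlet problem
\[
\Delta u = \tr{\Omega^{(1,1)}}{\tilde\omega} - n = \tfrac{1}{2}\tr{g_{\Omega}}{\tilde{g}} - n \quad \text{on } B_R, \qquad u|_{\partial B_R}=0,
\]
where $\Delta u = \tr{\Omega^{(1,1)}}{(dJdu)}$. The right-hand side is bounded by \eqref{tr}, so $u$ has uniform $W^{2,p}$ bounds for any $p$; this is where the $C^{1,\beta}$ bound on $u$ actually comes from, not from an $L^\infty$ bound combined with ``standard elliptic theory'' as you suggest. The choice of $u$ makes $\tilde\omega - \Omega - dJdu$ trace-free with respect to $\Omega^{(1,1)}$. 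Since $B_R$ is contractible and $\tilde\omega - \Omega$ is exact, the paper writes $\tilde\omega - \Omega - dJdu = da_0$, and then gauge-fixes $a := a_0 + dv$ by solving a Neumann problem so that $d^*a = 0$ and $*a|_{\partial B_R}=0$. The point is that $a$ then satisfies the first-order elliptic system
\[
da\wedge(\Omega^{(1,1)})^{n-1}=0, \qquad d^*a=0, \qquad (da)^{(2,0)+(0,2)} = -(\Omega + dJdu)^{(2,0)+(0,2)},
\]
where the first equation is exactly the trace-free condition created by the Dirichlet choice of $u$, and the right-hand side of the third is controlled in $C^\alpha$ because $(dJdu)^{(2,0)+(0,2)}$ involves only first derivatives of $u$ (cf.\ \eqref{20}). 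Elliptic regularity for this system then gives $a\in C^{1,\alpha}$, hence $da\in C^\alpha$, and one takes $\chi = \Omega + da + (dJdu)^{(2,0)+(0,2)}$, which is $C^\alpha$, yielding $\tilde\omega = \chi + (dJdu)^{(1,1)}$ as needed. None of the $1$-form manipulations are aimed at ``extracting a scalar potential''; the $1$-form $a$ remains in the final decomposition through $da$, which is simply absorbed into $\chi$. Your proposal as written would need to be reorganized along these lines, and in particular the Dirichlet-problem definition of $u$ (which simultaneously yields the trace-free condition that makes the system for $a$ elliptic) is the missing key idea.
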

In fact, one can easily derive from this higher order estimates $$ \| \tilde{g} \|_{C^{k}(B_{R/4}, \, g_{\Omega})} \le C_k,$$ for all $k\geq 1$, exactly as in \cite[Section 5]{TWY}, of course depending on higher derivatives of $\sigma$.
In the case when $\Omega$ is compatible with $J$ (not just taming), such a local estimate on the $C^{\alpha}$ norm of $\tilde{g}$ was proved by the third-named author \cite{We3} depending on two derivatives of $\sigma$, by adapting the Evans-Krylov method.  A global Calabi-type estimate on $\tilde{g}$ was proved by Tosatti-Weinkove-Yau \cite{TWY}, depending on three derivatives of $\sigma$.  Note that we state Theorem \ref{theoremdon} as a local rather than global result, because we anticipate that such local arguments may be useful in any future progress on  the full conjecture of Donaldson (cf. \cite{TW25}).

We now give the proof.

\begin{proof}[Proof of Theorem \ref{theoremdon}]  By shrinking $R$ if necessary, we may assume without loss of generality that $B_R$ is contained in a single coordinate patch for $M$, and that it is contractible.
Write $\Delta$ for the following Laplace operator associated to $g_{\Omega},$
$$\Delta u = \frac{n( \Omega^{(1,1)} )^{n-1} \wedge (dJdu)}{( \Omega^{(1,1)} )^{n}}=\tr{\Omega^{(1,1)}}{(dJdu)},$$
which in \cite{TWY} is called the canonical Laplacian of $g_{\Omega}$ (up to a factor of $2$). Note that
$$\tr{\Omega^{(1,1)}}{(\Omega)}=\tr{\Omega^{(1,1)}}{(\Omega^{(1,1)})}=n,$$
and
$$\tr{\Omega^{(1,1)}}{(\ti{\omega})}=\frac{1}{2}\tr{g_{\Omega}}{\tilde{g}}.$$
We let $u$ solve the Dirichlet problem
$$\Delta u =  \frac{1}{2}\tr{g_{\Omega}}{\tilde{g}} - n  \quad \textrm{on } B_R, \quad u|_{\partial B_R} =0.$$
From the assumption (\ref{tr}) and the equation (\ref{cy}), the metric $\tilde{g}$ is uniformly equivalent to $g_{\Omega}$.  By standard linear elliptic theory, $u$ is uniformly bounded in $W^{2,p}(B_R)$ for any given $p>1$. From now on, we fix a value of $p>n$.

Since $\tilde{\omega} -\Omega - dJdu$ is exact, there is a $1$-form $a_0$ on $B_R$ with $da_0=\tilde{\omega} - \Omega - dJdu$. We solve the Neumann problem
\begin{equation}\label{neum}
\Delta_d v=-d^*a_0\quad \textrm{on } B_R,\quad *dv|_{\de B_R}=-*a_0|_{\de B_R},
\end{equation}
where $*$ is the Riemannian Hodge star operator of $g_\Omega$, $d^*$ is the adjoint of $d$ with respect to $g_\Omega$, and $\Delta_d=d^*d$ is the Hodge Laplacian on functions.
As usual given a differential form $\alpha$ on $\ov{B_R}$, we write $\alpha|_{\de B_R}$ for the pullback of $\alpha$ under the inclusion of $\de B_R$ into $\ov{B_R}$. Then \eqref{neum} is indeed a Neumann problem, because
$$*dv|_{\de B_R}=\de_\nu v\  dS, \quad *a_0|_{\de B_R}=i_\nu a_0\  dS,$$
where $\nu$ is the $g_\Omega$-unit outward normal vector to $\de B_R$, $i_\nu$ is the interior product with $\nu$, and $dS$ is volume form on $\de B_R$ induced by $g_\Omega$.

We conclude that $a:=a_0+dv$ satisfies
$$\tilde{\omega} = \Omega + dJdu + da, \quad d^* a=0,\quad *a|_{\partial B_R}=0.$$
We can then apply for example \cite[Theorem D (b)]{We} to get
$$\|a\|_{W^{1,p}(B_R)}\leq C\|da\|_{L^p(B_R)}\leq C,$$
for a uniform constant $C$, thanks to assumption \eqref{tr} and the $W^{2,p}(B_R)$ bound for $u$.

Now observe that on $B_R$, $a$ solves the elliptic system (cf. \cite[equation (5.4)]{TWY})
$$da \wedge ( \Omega^{(1,1)} )^{n-1}  = 0, \quad d^* a=0, \quad (da)^{(2,0)+(0,2)} = - (\Omega+ dJdu)^{(2,0)+(0,2)}.$$
Equation \eqref{20} shows that $(dJdu)^{(2,0)+(0,2)}$ does not contain any second derivatives of $u$, and in fact it depends linearly on the gradient of $u$.
Since $u$ is bounded in $W^{2,p}$ it follows that $(dJdu)^{(2,0)+(0,2)}$ is bounded in $W^{1,p}$ and hence in $C^{\alpha}$ for some $\alpha \in (0,1)$.  By the standard interior elliptic estimates and the fact that $a$ is already bounded in $W^{1,p}$, we obtain $C^{1,\alpha}$ bounds for $a$  in a slightly smaller ball.  Hence $da$ is bounded in $C^{\alpha}$ in that ball.

Since $\tilde{\omega} = \Omega+da + dJdu$,
we can now write equation (\ref{cy}) locally as
$$(\chi + (dJd u)^{(1,1)})^n = \sigma,$$
for $\chi=\Omega+da+(dJdu)^{(2,0)+(0,2)}$ a form of type $(1,1)$ which is bounded in $C^{\alpha}$.  The theorem then follows immediately from the (local version of) Theorem \ref{theoremCG} for the almost complex version of (\ref{eqnma}).
\end{proof}

\end{document}